\newtheorem{theorem}{Theorem}
\newtheorem{lemma}{Lemma}
\newtheorem{proposition}{Proposition}
\def\W{\Omega}
\def\demi{{1\over2}}
\def\Z{{\mathbb{Z}}}
\def\P{{\mathbb{P}}}
\def\N{{\mathbb{N}}}
\def\R{{\mathbb{R}}}
\def\ue{\underline{e}}
\def\oe{\overline{e}}
\def\E{{\mathbb{E}}}
\def\dive{\operatorname{div}}
\def\hhh{{\mathcal{H}}}
\def\indic{\mathbh{1}}
\def\Q{\mathbb{Q}}
\begin{document}
\begin{frontmatter}

\title{Random Dirichlet environment viewed from the particle in
dimension $d\ge3$\thanksref{T1}}
\runtitle{RWDE viewed from the particle}

\thankstext{T1}{Supported by the ANR project MEMEMO.}

\begin{aug}
\author[A]{\fnms{Christophe} \snm{Sabot}\corref{}\ead[label=e1]{sabot@math.univ-lyon1.fr}}
\runauthor{C. Sabot}
\affiliation{Universit\'{e} de Lyon}
\address[A]{Universit\'{e} de Lyon\\
Universit\'{e} Lyon 1, CNRS UMR5208\\
Institut Camille Jordan, 43, bd du 11 nov.,\\
69622 Villeurbanne cedex\\
France\\
\printead{e1}} 
\end{aug}

\received{\smonth{7} \syear{2010}}
\revised{\smonth{7} \syear{2011}}

%
\begin{abstract}
We consider random walks in random
Dirichlet environment\break (RWDE), which is a special type of random
walks in random environment where the exit probabilities at each
site are i.i.d. Dirichlet random variables. On $\Z^d$, RWDE are
parameterized by a $2d$-tuple of positive reals called weights. In
this paper, we characterize for $d\ge3$ the weights for which
there exists an absolutely continuous invariant probability
distribution for
the process viewed from the particle. We can deduce from this
result and from [\textit{Ann. Inst. Henri Poincar\'e Probab. Stat.}
\textbf{47} (2011) 1--8] a complete description of
the ballistic regime for $d\ge3$.
\end{abstract}

%
\begin{keyword}[class=AMS]
\kwd[Primary ]{60K37}
\kwd{60K35}.
\end{keyword}
\begin{keyword}
\kwd{Random walk in random environment}
\kwd{Dirichlet distribution}
\kwd{reinforced random walks}
\kwd{invariant measure viewed from the particle}.
\end{keyword}

\end{frontmatter}
%

\section{Introduction}\label{sec1}
Multidimensional random walks in random environment have received
a considerable attention in the last ten years. Some important
progress has been made in the ballistic regime (after the seminal works
\cite{Kalikow,Sznitman-Zerner,Sznitman1,Sznitman2})
and for small perturbations of the simple random walk \cite
{Sznitman-Zeitouni,Bolthausen-Zeitouni}.
We refer to~\cite{Zeitouni} for a detailed survey. Nevertheless, we are
still far
from a complete description, and some basic questions are
open such as the characterization of recurrence, ballisticity. The
point of view of the environment viewed from the particle has
been a powerful tool to investigate the random conductance model; it is
a key ingredient in
the proof of invariance principles \cite
{Kipnis-Varadhan,Kozlov,Sidoravicius-Sznitman,Mathieu}
but has had a rather little impact on the nonreversible model. The
existence of an absolutely
continuous invariant measure for the process viewed from the
particle (the so called ``equivalence of the static and dynamical point
of view'') is only known in a few cases:
for dimension 1, cf. Kesten~\cite{Kesten} and Molchanov \cite
{Molchanov} pages 273--274; in the case of balanced
environment of Lawler~\cite{Lawler}; for ``nonnestling'' RWRE in
dimension $d\ge4$ at low disorder, cf. Bolthausen and
Sznitman~\cite{Bolthausen-Sznitman}; and in a weaker form for ballistic
RWRE (equivalence in
half-space), cf.~\cite{Rassoul-Agha,Rassoul-Agha-Seppalainen}.
Note that invariance principles have nevertheless been obtained under
special assumptions:
under the ballistic assumption~\cite{Rassoul-Agha-Seppalainen,Berger-Zeitouni}
and for weak disorder in dimension $d\ge3$,
\cite{Sznitman-Zeitouni,Bricmond-Kupianen}.


Random walks in Dirichlet environment (RWDE) is a special case
where at each site the environment is chosen according to a
Dirichlet random variable. One remarkable property of Dirichlet
environments is that the annealed law
of RWDE is the law of a directed edge reinforced random walk as
remarked initially in Pemantle's Ph.D. thesis
\cite{Pemantle1,Pemantle2}, the idea of reinforced random
walks going back to Diaconis and Coppersmith; cf. \cite
{Pemantle-survey} for a
survey. While this
model of environment is fully random (the support of the distribution
on the environment is the space of weakly
elliptic environment itself), it
shows some surprising analytic simplifications; cf.
\cite{Sabot,hypergeom,Sabot-Tournier,Enriquez-Sabot,Enriquez-Sabot-Zindy}. In
particular, in~\cite{Sabot}, the author proved that RWDE are transient on
transient graphs; cf.~\cite{Sabot} for a precise result. This
result uses in a crucial way a property of statistical invariance
by time reversal; cf. Lemma 1 of~\cite{Sabot}.

RWDE are parametrized by $2d$ reals called the weights (one for each
direction in $\Z^d$) which govern the behavior of the walk.
In this paper we characterize on $\Z^d$, $d\ge3$, the weights for
which there exists an
invariant probability measure for the environment viewed from the
particle, which is absolutely continuous with
respect to the law of the environment. More precisely, it is shown that
there is an absolutely
continuous invariant probability exactly when the parameters are such
that the time spent in finite size traps has finite expectation.
Together with previous results on directional transience \cite
{Sabot-Tournier} it leads,
using classical results on stationary ergodic sequences, to a complete
description of
the ballistic regimes for RWDE in dimension larger or equal to 3.
Besides, we think that the proof of the existence of an absolutely
continuous invariant distribution for the environment viewed from the
particle could be a first step toward an implementation
of the technics developed to prove functional central limit theorems;
cf., for example,~\cite{Komorowski-Olla}.

\section{Statement of the results}\label{sec2}
Let $(e_1, \ldots,e_d)$ be the canonical base of $\Z^d$, and set
$e_j=-e_{j-d}$, for $j=d+1, \ldots,2d$. The set $\{e_1, \ldots
,e_{2d}\}$ is the set of unit vectors of $\Z^d$. We denote by
$\|z\|=\sum_{i=1}^d \vert z_i\vert$ the $L_1$-norm of $z\in\Z^d$.
We write $x\sim y$ if $\|y-x\|=1$. We consider elliptic random
walks in random environment to nearest neighbors. We denote by
$\W$ the set of environments
\begin{eqnarray*}
&&\W=\Biggl\{\omega =(\omega (x,y))_{x\sim y}\in\,]0,1]^E,\\
&&\phantom{\W=\Biggl\{} \mbox{such that
for all
$x\in\Z^d$, } \sum_{i=1}^{2d} \omega (x,x+e_i)=1\Biggr\}.
\end{eqnarray*}
An environment $\omega $ defines the transition probability of a Markov
chain on $\Z^d$, and we denote by $P^{\omega }_x$ the law of this
Markov chain starting from $x$.
\[
P^{\omega }_x[ X_{n+1}=y+e_i| X_n=y]=\omega (y,y+e_i).
\]

The classical model of nonreversible random environment
corresponds to the model where at each site $x\in\Z^d$ the
environment $(\omega (x,x+e_i))_{i=1, \ldots, 2d}$ is chosen
independently according to the same law. Random Dirichlet
environment corresponds to the case where this law is a Dirichlet
law. More precisely, we choose some positive weights $(\alpha_1,
\ldots,\alpha_{2d})$, and we define $\lambda=\lambda^{(\alpha)}$
as the Dirichlet law with parameters $(\alpha_1, \ldots,
\alpha_{2d})$. It means that $\lambda^{(\alpha)}$ is the law on
the simplex
%
\begin{equation}
\label{T2d}
\Biggl\{ (x_1, \ldots,x_{2d})\in\,]0,1]^{2d}, \sum_{i=1}^{2d} x_i=1\Biggr\}
\end{equation}
with density
%
\begin{equation} \label{simplex}
{\Gamma(\sum_{i=1}^{2d} \alpha_i)\over\prod_{i=1}^{2d}
\Gamma(\alpha_i)} \Biggl( \prod_{i=1}^{2d} x_i^{\alpha_i-1} \Biggr)
\,dx_1\cdots \,dx_{2d-1},
\end{equation}
where $\Gamma$ is the usual Gamma function
$\Gamma(\alpha)=\int_0^\infty t^{\alpha-1} e^{-t} dt$. [In the
previous expression $dx_1\cdots \,dx_{2d-1}$ represents the image of
the Lebesgue measure on $\R^{2d-1}$ by the application $(x_1,
\ldots, x_{2d-1})\rightarrow(x_1, \ldots, x_{2d-1},
1-(x_1+\cdots+x_{2d-1})]$. Obviously, the law does not depend on
the specific role of $x_{2d}$.) We denote by $\P^{(\alpha)}$ the
law obtained on $\W$ by picking at each site $x\in\Z^d$ the
transition probabilities $(\omega (x,x+e_i))_{i=1, \ldots, 2d}$
independently according to $\lambda^{(\alpha)}$. We denote by $\E
^{(\alpha)}$
the expectation with respect to $\P^{(\alpha)}$ and by
$\P^{(\alpha)}_x[\cdot]=\E^{(\alpha)}[ P^{(\omega )}_x(\cdot)]$
the annealed
law of the process starting at $x$.
This type of environment plays a special role since the annealed law
corresponds to a directed edge reinforced random walk with an
affine reinforcement, that is,
\[
\P^{(\alpha)}_x [X_{n+1}=X_n+e_i | \sigma(X_k, k\le n)]=
{\alpha_i+N_i(X_n, n)\over\sum_{k=1}^{2d} \alpha_k+N_k(X_n, n)},
\]
where $N_k(x,n)$ is the number of crossings of the directed edge
$(x,x+e_k)$ up to time $n$. This is just a consequence of the fact that
the Dirichlet law is the mixing measure
of Polya urns so that at each site the annealed process choose a
direction following a Polya urn with parameters
$(\alpha_i)_{i=1, \ldots, 2d}$; cf.~\cite{Pemantle1} or~\cite{Pemantle2}.
When the weights are constant equal to $\alpha$, the environment
is isotropic: when $\alpha$ is large, the environment is close to
the deterministic environment of the simple random walk, when
$\alpha$ is small the environment is very disordered. The
following parameter $\kappa$ is important in the description of
the RWDE:
\[
\kappa= 2\Biggl(\sum_{i=1}^{2d} \alpha_i\Biggr) -\max_{i=1, \ldots
, d} (\alpha_{i}+\alpha_{i+d}).
\]
If $i_0\in\{1, \ldots,d\}$ realizes the maximum in the last term, then
$\kappa$ is
the sum of the weights of the edges exiting the set $\{0,e_{i_0}\}$ (or
$\{0,-e_{i_0}\}$).
The real $\kappa$ must be understood as the strength of the trap $\{
0,e_{i_0}\}$: indeed,
if $\tilde G^\omega (0,0)$ is the Green function at $(0,0)$ of the Markov
chain in environment $\omega $ killed
at its exit time of the set $\{0,e_{i_0}\}$, then $\tilde G^\omega (0,0)^s$
is integrable if and only if $s<\kappa$~\cite{Tournier}.
In~\cite{Sabot} it has been proved for $d\ge3$ that the same is true
for the Green
function $G(0,0)$ on $\Z^d$ itself: it has
integrable $s$-moment if and only if $s<\kappa$.

Denote by $(\tau_x)_{x\in\Z^d}$ the shift on the environment
defined by
\[
\tau_x\omega (y,z)=\omega (x+y,x+z).
\]
Let $X_n$ be the random walk in environment $\omega $. The process
viewed from the particle is the process on the state space $\W$
defined by
\[
\overline\omega _n=\tau_{X_n}\omega .
\]
Under $P^{\omega _0}_0$, $\omega _0\in\W$ (resp., under $\P_0$),
$\overline\omega _n$ is a Markov process on state space $\W$ with
generator $R$ given by
\[
Rf(\omega )=\sum_{i=1}^{2d} \omega (0,e_i) f(\tau_{e_i} \omega ),
\]
for all bounded measurable function $f$ on $\W$, and with initial
distribution $\delta_{\omega _0}$ (resp., $\P$); cf., e.g.,
\cite{Sznitman-ten}. Compared to the quenched process, the process
viewed from the particle is Markovian. Since the state space is
huge, one needs to take advantage of this point of view, to have
the existence of an invariant probability measure, absolutely
continuous with respect to the initial measure on the environment.
The following theorem solves this problem in the special case of
Dirichlet environment in dimension $d\ge3$ and is the main result
of the paper.
\begin{theorem}\label{main-result}
Let $d\ge3$ and $\P^{(\alpha)}$ be the law of the Dirichlet
environment with weights $(\alpha_1, \ldots, \alpha_{2d})$. Let
$\kappa>0$ be defined by
\[
\kappa= 2\Biggl(\sum_{i=1}^{2d} \alpha_i\Biggr) -\max_{i=1, \ldots
, d} (\alpha_{i}+\alpha_{i+d}).
\]

\begin{longlist}[(ii)]
\item[(i)] If $\kappa>1$, then there exists a unique probability
distribution $\Q^{(\alpha)}$ on $\W$ absolutely continuous with
respect to $\P^{(\alpha)}$ and invariant by the generator $R$.
Moreover ${d\Q^{(\alpha)}\over d\P^{(\alpha)}}$ is in $L_p(\P
^{(\alpha
)})$ for
all $1\le p<\kappa$.\vspace*{2pt}

\item[(ii)] If $\kappa\le1$, there does not exist any probability
measure invariant by $R$ and absolutely continuous with respect to
the measure $\P^{(\alpha)}$.
\end{longlist}
\end{theorem}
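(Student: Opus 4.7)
The plan is to construct the candidate invariant density as a large-volume limit of stationary distributions on tori, and to control its moments via the trap estimates of \cite{Sabot,Tournier}. For $N\ge 1$ consider the torus $\mathbb{T}_N=(\Z/N\Z)^d$ equipped with the iid Dirichlet environment of weights $\alpha$. The random walk on $\mathbb{T}_N$ is almost surely irreducible and hence admits a unique invariant probability $\pi^\w_N$; since the law of the environment on $\mathbb{T}_N$ is invariant under all torus translations, $\E^{(\alpha)}[\pi^\w_N(x)]=N^{-d}$ for every $x$. Set
$$f_N(\w):=N^d\,\pi^\w_N(0).$$
Then $f_N\ge 0$ and $\E^{(\alpha)}[f_N]=1$, and I will show that a subsequence of $f_N$ converges to a density $f_\infty$ such that $\Q^{(\alpha)}:=f_\infty\,\P^{(\alpha)}$ realizes all the properties of (i).

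The heart of the matter, and what I expect to be the main obstacle, is the uniform bound
$$\sup_N\E^{(\alpha)}[f_N^p]\le C_p<\infty\qquad\text{for every }1\le p<\kappa.$$
My starting point would be the identity $\pi^\w_N(0)=1/E^\w_0[T_0^+]$ so that $f_N=N^d/E^\w_0[T_0^+]$; decomposing $E^\w_0[T_0^+]$ along excursions based at a mesoscopic box $B_M\subset\mathbb{T}_N$ reduces the estimate to controlling a Green function on $B_M$ in a strong sense. Here the statistical invariance by time reversal of the Dirichlet law (Lemma 1 of \cite{Sabot}) is crucial: combined with the Markov tree theorem, it identifies $\pi^\w_N(0)$ with a Green-function-like object for a reversed walk whose tail, by \cite{Tournier}, is governed by exactly the same parameter $\kappa$. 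Transience on $\Z^d$, $d\ge 3$, is then used to decouple the scales $M$ and $N$, ensuring that contributions of excursions returning from outside $B_M$ are summable and do not spoil the bound uniformly in $N$.

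Given the moment bound, weak-$L^p$ compactness produces a subsequential limit $f_\infty$ with $\E^{(\alpha)}[f_\infty^p]\le C_p$. I would then pass to the limit in the identity $\int(Rg)\,f_N\,d\P^{(\alpha)}=\int g\,f_N\,d\P^{(\alpha)}$ for bounded cylindrical $g$ (this identity is just $\pi^\w_N R=\pi^\w_N$ averaged over $\w$) to conclude that $\Q^{(\alpha)}$ is $R$-invariant. Uniqueness, and consequently convergence of the whole sequence, follows from a standard argument as in \cite{Kozlov,Sznitman-ten}: ergodicity of $\P^{(\alpha)}$ under the $\Z^d$-action and irreducibility of the quenched walk force every bounded $R$-harmonic function to be constant, so that the cone of $R$-invariant densities is at most one-dimensional.

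For part (ii), I argue by contradiction. Translation invariance of $\P^{(\alpha)}$ yields for the $L^2(\P^{(\alpha)})$-adjoint
$$R^* h(\w)=\sum_{i=1}^{2d}\w(-e_i,0)\,h(\tau_{-e_i}\w),$$
and if $\Q=f\P^{(\alpha)}$ is $R$-invariant then $R^*f=f$. Iterating $R^*$ along the excursions of the reversed walk inside a trap $U=\{0,e_{i_0}\}$ and using once more the time-reversal symmetry to identify these excursions with those of a forward Dirichlet walk with permuted weights expresses $f$ as an integral of $\tilde G^{\w'}(0,0)$ against another Dirichlet measure. Integrating against $\P^{(\alpha)}$ and using $\E^{(\alpha)}[f]=1$ would force $\tilde G^\w(0,0)$ to be $\P^{(\alpha)}$-integrable on a set of positive measure, contradicting the tail estimate $\P^{(\alpha)}(\tilde G^\w(0,0)>t)\asymp t^{-\kappa}$ of \cite{Tournier} as soon as $\kappa\le 1$.
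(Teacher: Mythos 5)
Your framework for part (i)---periodize to $T_N$, set $f_N=N^d\pi^\w_N(0)$, prove a uniform $L^p$ bound for $1\le p<\kappa$, pass to a weak subsequential limit, then get invariance and uniqueness by the standard argument---is exactly the paper's skeleton, and the easy facts you state ($\E^{(\alpha)}[f_N]=1$, invariance of $\Q^{(\alpha)}_N$, uniqueness à la Kozlov) are all correct. The gap is that the entire technical content of part (i) is the uniform bound $\sup_N\E^{(\alpha)}[f_N^p]<\infty$, and your sketch of it does not amount to a proof. You write $f_N=N^d/E^\w_0[T_0^+]$ and propose to control it by excursions off a mesoscopic box $B_M$, ``combined with the Markov tree theorem'' and time reversal, invoking the Green-function tail $t^{-\kappa}$ from \cite{Tournier}. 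None of these steps is carried out, and the hard point is precisely hidden there: the tail bound for the \emph{local} Green function $\tilde G^\w(0,0)$ does not directly give a bound on $\pi^\w_N(0)$ that is uniform in $N$ and sharp at exponent $\kappa$ --- the stationary mass at $0$ is a global quantity, and the excursion decomposition you gesture at must be made quantitative, with errors summable over scales, for an arbitrary torus size. The Markov chain tree theorem is never used in the paper and it is not at all clear it can be made to work here. What the paper actually does is much more surgical: write $f_N^p\le\prod_y(\pi(0)/\pi(y))^{p/N^d}$ by AM--GM, observe that $\check\w^{\check\theta}/\w^{\theta}=\pi^{\dive(\theta)}$ so that any nonnegative $\theta$ with $\dive(\theta)=(p/N^d)\sum_y(\delta_0-\delta_y)$ yields $f_N^p\le\check\w^{\check\theta}/\w^\theta$; then after a small trick multiplying by $\w(0,e_i)^\kappa$ to raise one weight, construct $\theta_N$ by a max-flow/min-cut argument (Lemma~\ref{L2-max-flow}) with $\theta_N\le\tfrac{p}{\kappa}\alpha^{(i)}$ pointwise and $\|\theta_N\|_2$ bounded uniformly in $N$, so that H\"older plus the explicit Gamma-function formula for Dirichlet moments closes the estimate. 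The pointwise capacity constraint via $m(\alpha^{(i)})\ge\kappa$ is exactly what produces the threshold $\kappa$, and there is no analogue of it in your sketch; you would need to supply a replacement argument and it is far from routine.

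For part (ii) you propose to iterate the $\P^{(\alpha)}$-adjoint $R^*$ ``along excursions of the reversed walk inside the trap'' to express the density $f$ in terms of $\tilde G^\w(0,0)$ and derive a contradiction with $\E^{(\alpha)}[f]=1$. Your formula for $R^*$ is correct, but $R^*$ is not a Markov transition operator (the weights $\w(-e_i,0)$ do not sum to $1$ at a site), so ``iterating along excursions'' is not defined as stated, and the claimed representation of $f$ by $\tilde G^\w(0,0)$ is not established. The paper argues quite differently and more robustly: from Lemma~\ref{ergodic}, the increment sequence $(\Delta_i)$ is stationary ergodic under $\Q^{(\alpha)}_0$, and transience (\cite{Sabot}) gives $\E^{\Q^{(\alpha)}}[P_0^\w(H_0^+=\infty)]>0$; then the range theorem for stationary ergodic sequences yields $R_n/n\to\E^{\Q^{(\alpha)}}[P_0^\w(H_0^+=\infty)]>0$ a.s., while the non-integrability of the exit time of the two-point trap when $\kappa\le1$ (\cite{Tournier}) together with independence of the environment forces $R_n/n\to0$ a.s.\ under $\P^{(\alpha)}_0$, a contradiction. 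You should either make your $R^*$ argument precise (it is not clear that it can be made to work) or adopt the range argument.

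In short: the outer structure is right, but the two load-bearing steps --- the uniform $L^p$ bound in (i) and the non-existence argument in (ii) --- are missing, and the routes you sketch toward them are both different from the paper's and not obviously viable as stated.
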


We can deduce from this result and from~\cite{Tournier,Sabot-Tournier}, a characterization of ballisticity for
$d\ge3$. Let $d_\alpha$ be the mean drift at first step
\[
d_\alpha=\E^{(\alpha)}_0(X_1)={1\over\sum_{i=1}^{2d}
\alpha_i}\sum_{i=1}^{2d} \alpha_i e_i.
\]

\begin{theorem}\label{ballisticity}
Let $d\ge3$.
\begin{longlist}[(iii)]
\item[(i)] (cf.~\cite{Tournier}) If $\kappa\le1$, then
\[
\lim_{n\to\infty} {X_n\over n} =0,\qquad \P^{(\alpha)}_0 \mbox{ a.s.}
\]
\item[(ii)] If $\kappa>1$ and $d_\alpha=0$, then
\[
\lim_{n\to\infty} {X_n\over n} =0,\qquad \P^{(\alpha)}_0 \mbox{ a.s.}
\]
and for all $i=1, \ldots, d$
\[
\liminf X_n\cdot e_i=-\infty,\qquad \limsup X_n\cdot e_i=+\infty,\qquad \P
^{(\alpha
)}_0\mbox{ a.s.}
\]
\item[(iii)] If $\kappa>1$ and $d_\alpha\neq0$, then there exists $v\neq
0$ such that
\[
\lim_{n\to\infty} {X_n\over n} =v,\qquad \P^{(\alpha)}_0 \mbox{ a.s.}
\]
Moreover, for the integers $i\in\{1, \ldots, d\}$ such that $d_\alpha
\cdot e_i\neq0$ we have
\[
(d_\alpha\cdot e_i)(v\cdot e_i)>0.
\]
For the integers $i\in\{1, \ldots, d\}$ such that $d_\alpha\cdot e_i=0$
\[
\liminf X_n\cdot e_i=-\infty,\qquad \limsup X_n\cdot e_i=+\infty,\qquad \P
^{(\alpha
)}_0\mbox{ a.s.}
\]
\end{longlist}
\end{theorem}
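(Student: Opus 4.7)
Part (i) is the main result of \cite{Tournier} and requires no further argument. For parts (ii) and (iii) the hypothesis $\kappa>1$ is in force, and the plan is to use the invariant probability $\Q^{(\alpha)}$ produced by Theorem \ref{main-result} to place the environment viewed from the particle into the framework of a stationary ergodic Markov chain. Writing $d(\omega)=\sum_{i=1}^{2d}\omega(0,e_i)\,e_i$ for the local drift at the origin, the process $M_n=X_n-\sum_{k=0}^{n-1}d(\bar\omega_k)$ is a quenched martingale with uniformly bounded increments, so $M_n/n\to 0$ almost surely under every $P^{\omega}_0$. Under the annealed law $\int P^{\omega}_0(\cdot)\,d\Q^{(\alpha)}(\omega)$ the chain $(\bar\omega_k)$ is stationary, and its ergodicity follows from the uniqueness clause of Theorem \ref{main-result} together with the ergodicity of the spatial translations acting on the i.i.d.\ product measure $\P^{(\alpha)}$. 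Birkhoff's theorem then yields
\[
\frac{1}{n}\sum_{k=0}^{n-1}d(\bar\omega_k)\longrightarrow v:=\int_\W d(\omega)\,d\Q^{(\alpha)}(\omega)
\]
$\Q^{(\alpha)}$-annealed almost surely, hence $X_n/n\to v$ in the same sense; a standard argument based on the $R$-harmonicity of $\omega\mapsto P^{\omega}_0[X_n/n\to v]$, combined with the absolute continuity $\Q^{(\alpha)}\ll\P^{(\alpha)}$, transfers the convergence to the $\P^{(\alpha)}_0$-annealed setting.

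It remains to identify $v$. In case (ii) the hypothesis $d_\alpha=0$ is equivalent to $\alpha_i=\alpha_{i+d}$ for every $i\in\{1,\ldots,d\}$, so $\P^{(\alpha)}$ is invariant under each of the $d$ coordinate reflections; the uniqueness clause of Theorem \ref{main-result} propagates this symmetry to $\Q^{(\alpha)}$ and forces $v=0$. The oscillation statement $\liminf X_n\cdot e_i=-\infty$, $\limsup X_n\cdot e_i=+\infty$ then follows from the $0$-$1$ dichotomy for the translation invariant event $\{X_n\cdot e_i\to+\infty\}$ combined with the same coordinate reflection, which rules out both transient alternatives. In case (iii), \cite{Sabot-Tournier} furnishes the directional transience $X_n\cdot e_i\to+\infty$ for those $i$ with $d_\alpha\cdot e_i>0$ (and the symmetric statement for $d_\alpha\cdot e_i<0$); combined with $X_n\cdot e_i/n\to v\cdot e_i$ this already yields the weak inequality $(d_\alpha\cdot e_i)(v\cdot e_i)\ge 0$. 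For coordinates with $d_\alpha\cdot e_i=0$ we still have $\alpha_i=\alpha_{i+d}$, so the partial reflection symmetry in that coordinate survives and the oscillation conclusion follows exactly as in (ii).

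The main difficulty is the strict inequality $(d_\alpha\cdot e_i)(v\cdot e_i)>0$ in (iii): directional transience is compatible with sub-ballistic speeds, so one must exclude $v\cdot e_i=0$ when $d_\alpha\cdot e_i\neq 0$. The route I would follow exploits precisely what the assumption $\kappa>1$ buys, namely the integrability of exit times from finite-size traps, which is the quantitative content of Theorem \ref{main-result}. The strategy is to argue that zero speed in a direction of positive mean drift could only be sustained by a trapping mechanism with infinite expected holding time, contradicting the bound $d\Q^{(\alpha)}/d\P^{(\alpha)}\in L^p(\P^{(\alpha)})$ for some $p>1$ provided by Theorem \ref{main-result}; the cleanest implementation is likely through a regeneration structure along the direction $d_\alpha$. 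Turning this heuristic into a rigorous lower bound on $v\cdot e_i$ is where I expect the technical core of the proof to concentrate.
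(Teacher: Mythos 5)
Your framework for the law of large numbers (stationarity and ergodicity of $(\bar\omega_n)$ under the annealed $\Q^{(\alpha)}$-law, Birkhoff, transfer back to $\P^{(\alpha)}_0$ via absolute continuity) matches the paper's Lemma \ref{ergodic} and its use. The symmetry argument forcing $v\cdot e_i=0$ whenever $\alpha_i=\alpha_{i+d}$ is also the paper's. So the routine parts of (ii) and (iii) are fine.

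The real gap is the strict inequality $(d_\alpha\cdot e_i)(v\cdot e_i)>0$, which you explicitly leave open and propose to attack via a regeneration/trap-time argument. This is unnecessary and much harder than what is needed. The mechanism you have already assembled suffices: under $\Q^{(\alpha)}_0$ the increments $\Delta_k\cdot e_i$ form a stationary ergodic real-valued sequence with mean $v\cdot e_i$, and the Chung--Fuchs type dichotomy for stationary ergodic sequences (Durrett, Theorem 6.3.2) says that if the mean is zero and the increments are not identically zero, then the partial sums satisfy $\liminf=-\infty$ and $\limsup=+\infty$, in particular they return to $0$ infinitely often. But when $d_\alpha\cdot e_i>0$, \cite{Sabot-Tournier} gives $\P^{(\alpha)}_0(A_{e_i})>0$, and Kalikow's $0$--$1$ law upgrades this to $\P^{(\alpha)}_0(A_{e_i}\cup A_{-e_i})=1$, so $X_n\cdot e_i$ visits $0$ only finitely often; since $\Q^{(\alpha)}\sim\P^{(\alpha)}$ the same holds under $\Q^{(\alpha)}_0$. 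Contradiction with the oscillation alternative, hence $v\cdot e_i\neq 0$, and the sign is fixed by the directional transience (or by your weak inequality). This is the entirety of the paper's argument for the strict inequality; no regeneration structure or exit-time estimate enters at all — $\kappa>1$ is used only to produce $\Q^{(\alpha)}$ in the first place.

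A smaller remark: for the oscillation claims in (ii) and in the $d_\alpha\cdot e_i=0$ part of (iii), your appeal to the Kalikow $0$--$1$ law plus reflection symmetry only rules out $X_n\cdot e_i\to+\infty$ and $X_n\cdot e_i\to-\infty$, but by itself does not exclude $X_n\cdot e_i$ remaining bounded, so it does not directly give $\liminf=-\infty$ and $\limsup=+\infty$. The paper instead invokes the same Durrett Theorem 6.3.2 applied to the mean-zero stationary ergodic sequence $(\Delta_k\cdot e_i)$, which yields the oscillation conclusion in one stroke and is the cleaner route here.
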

\begin{remark} This answers in the case of RWDE for $d\ge3$ the
following question: is directional transience equivalent to
ballisticity? The answer is formally ``no'' but morally ``yes'':
indeed, it is proved in~\cite{Sabot-Tournier} that for all $i$ such
that $d_\alpha\cdot e_i\neq0$, $X_n\cdot e_i$ is transient;
hence, for $\kappa\le1$ directional transience and zero speed can
coexist. But, it appears in the proof of~\cite{Tournier} that the
zero speed is due to finite size traps that come from the
nonellipticity of the environment. When $\kappa>1$, the expected
exit time of finite boxes is always finite (cf.~\cite{Tournier})
and in this case (ii) and~(iii) indeed tell that directional
transience is equivalent to ballisticity. For general
RWRE (and for RWDE in dimension 2) this is an important unsolved question.
Partial important results in this
direction have been obtained by Sznitman in
\cite{Sznitman1,Sznitman2} for general uniformly elliptic environment for
$d\ge2$.
\end{remark}
\begin{remark}
A law of of large number (with eventually random or null velocity) has
been proved for general (weakly) elliptic RWRE
by Zerner (cf.~\cite{Zerner}) using the technics of regeneration times
developed by Sznitman and Zerner in\vadjust{\goodbreak}
\cite{Sznitman-Zerner}. Nevertheless, when the directional 0--1 law is
not valid it is still not known whether there is
a deterministic limiting velocity (this was solved for $d\ge5$ by
Berger,~\cite{Berger}).
\end{remark}
\begin{remark}
In case (i), it would be interesting to understand the behavior of
$X_n\cdot e_i$ depending on the value of
$d_\alpha\cdot e_i$ as in (ii) and (iii). It is not yet possible due to
the absence of absolutely continuous invariant measure for the
process viewed from the particle. We nevertheless think that this
question should be settled in a further work.
\end{remark}

\section{\texorpdfstring{Proof of Theorem \protect\ref{main-result}(i)}{Proof of Theorem 1(i)}}\label{sec3}
Let us first recall a few definitions and give some notations. By
a directed graph we mean a pair $G=(V,E)$ where $V$ is a
countable set of vertices and $E$ the set of (directed) edges is a
subset of $V\times V$. For simplicity, we do not allow multiple
edges or loops [i.e., edges of the type $(x,x)$]. We denote by $\ue$,
respectively $\oe$, the tail and the
head of an edge $e\in E$, so that $e=(\ue,\oe)$. A~directed path
from a vertex $x$ to a vertex $y$ is a sequence $\sigma=(x_0=x,
\ldots, x_n=y)$ such that for all $i=1, \ldots, n,$
$(x_{i-1},x_i)$ is in $E$. The divergence operator is the function
$\dive:\R^E\mapsto\R^V$ defined for
$\theta\in\R^E$ by
\[
\forall x\in V,\qquad
\dive(\theta)(x)=\sum_{e\in E, \ue=x} \theta(e)-\sum_{e\in E, \oe=x}
\theta(e).
\]

We consider $\Z^d$ as a directed graph:
$G_{\Z^d}=(\Z^d, E)$ where the edges are the pair $(x,y)$ such
that $\|y-x\|=1$. On $E$ we consider the weights $(\alpha(e))_{e\in E}$
defined by
\[
\forall x\in\Z^d, i=1, \ldots, 2d,\qquad \alpha\bigl((x,x+e_i)\bigr)=\alpha_i.
\]
Hence, under $\P^{(\alpha)}$, at each site $x\in\Z^d$, the exit
probabilities $(\omega (e))_{\ue=x}$
are independent and distributed according to a Dirichlet law with
parameters $(\alpha(e))_{\ue=x}$.

When $N\in\N^*$, we denote by $T_N=(\Z/N\Z)^d$
the $d$-dimensional torus of size $N$. We denote by
$G_N=(T_N,E_N)$ the associated directed graph image of the graph
$G=(\Z^d, E)$ by projection on the torus. We denote by $d(\cdot,
\cdot)$ the shortest path distance on the torus. We write $x\sim
y$ if $(x,y)\in E_N$. Let $\W_N$ be the space of (weakly) elliptic
environments on $T_N$
\begin{eqnarray*}
&&\W_N=\Biggl\{\omega =(\omega (x,y))_{(x,y)\in E_N}\in\,]0,1]^{E_N}, \\
&&\phantom{\W_N=\Biggl\{}\mbox{such that }\forall x\in T_N, \sum_{i=1}^{2d} \omega (x,x+e_i)=1\Biggr\}.
\end{eqnarray*}
$\W_N$ is naturally identified with the
space of the $N$-periodic environments on $\Z^d$.
We denote by $\P_N^{(\alpha)}$ the Dirichlet law on the environment obtained
by picking independently at each site $x\in T_N$ the exiting
probabilities $(\omega (x,x+e_i))_{i=1, \ldots, 2d}$
according to a
Dirichlet law with parameters $(\alpha_i)_{i=1, \ldots, 2d}$.\vadjust{\goodbreak}

For $\omega $ in
$\W_N$ we denote by $(\pi_N^\omega (x))_{x\in T_N}$ the invariant
probability measure of the Markov chain on $T_N$ with transition
probabilities $\omega $ (it is unique since the environments are
elliptic). Let
\[
f_N(\omega )= N^d \pi_N^\omega (0),
\]
and
\[
\Q_N^{(\alpha)}=f_N\cdot\P_N^{(\alpha)}.
\]
Thanks to translation invariance, $\Q_N^{(\alpha)}$ is a probability
measure on
$\W_N$.
Theorem~\ref{main-result} is a consequence of the following
lemma.
\begin{lemma}\label{main-lemma}
Let $d\ge3$. For all $p\in[1,\kappa[$
\[
\sup_{N\in\N} \| f_N\|_{L_p(\P^{(\alpha)}_N)} <\infty.
\]
\end{lemma}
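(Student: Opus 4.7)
The plan is to find an algebraic representation of $\pi_N^\w(0)$ suited to Dirichlet integration, and then extract the $\kappa$-threshold from the explicit Gamma-ratio that arises in the Dirichlet moment formula. A natural starting point is the Markov chain tree (Kirchhoff) formula: on the finite irreducible graph $G_N$,
$$
\pi_N^\w(0) = \frac{Z_0(\w)}{\sum_{x \in T_N} Z_x(\w)}, \qquad Z_x(\w) = \sum_{T \in \mathcal{T}_x} \prod_{e \in T} \w(e),
$$
where $\mathcal{T}_x$ is the set of spanning arborescences of $G_N$ rooted at $x$ (all edges oriented toward $x$). An equivalent formulation, consonant with the divergence operator just introduced, is that $\theta(e) = \pi_N^\w(\ue)\,\w(e)$ is the unique nonnegative probability flow on $E_N$ with $\dive(\theta)=0$; this second description is well suited to bounding $\pi_N^\w(0)$ from above by a max-flow / min-cut capacity across a cut separating a neighborhood of $0$ from the rest of the torus.

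The core computation is then the Dirichlet integration. Since $\P_N^{(\alpha)}$ makes the families $(\w(y,y+e_i))_{i=1,\ldots,2d}$ independent across $y \in T_N$, any monomial $\prod_e \w(e)^{n_e}$ factorizes over vertices and each local factor is explicit via
$$
\int \prod_i x_i^{\alpha_i + n_i - 1}\,dx_1\cdots dx_{2d-1} = \frac{\prod_i \Gamma(\alpha_i + n_i)}{\Gamma\bigl(\sum_i(\alpha_i + n_i)\bigr)}.
$$
Substituting the arborescence identity, or the min-cut bound, into this formula reduces the evaluation of $\E^{(\alpha)}_N[f_N^p]$ to an explicit sum of Gamma-ratio products. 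The $\kappa$-threshold should enter exactly as one hopes: a configuration in which the boundary weights of the trap $\{0,e_{i_0}\}$ carry large powers becomes non-integrable precisely when the cumulative exponent at the trap pushes past $-\kappa$, which is the same mechanism that controls the $s$-moments of $\tilde G^\w(0,0)$ in the Tournier result recalled in the introduction.

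To recover the $N^d$ normalization in $f_N = N^d \pi_N^\w(0)$, I would localize: by translation invariance of $\P_N^{(\alpha)}$ on $T_N$, the denominator $\sum_x Z_x(\w)$ receives comparable contributions from all $N^d$ vertices, whereas the numerator $Z_0(\w)$ is atypically large only when $\w$ creates a trap near $0$. Combining this either with a Holder step to decouple numerator from denominator, or with the flow / min-cut bound (whereby the $N^{-d}$ factor arises from a lower bound on the return time to $0$, using $d\ge 3$ to guarantee transient-like behavior on scales below $N$), should yield the uniform estimate $\sup_N \|f_N\|_{L_p(\P^{(\alpha)}_N)} < \infty$.

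The main obstacle is the combinatorial complexity of both representations: the arborescence sum and the explicit Gamma-ratio products have intricate global structure, and reorganizing them so that $\kappa$ appears cleanly as the threshold is the nontrivial work. I expect the cleanest route to be the flow / min-cut version, which localizes the problematic integrals to a bounded number of edges around $0$ and reduces the moment bound to a purely local Dirichlet integral whose finiteness is equivalent to $p<\kappa$; the $N^{-d}$ factor is then extracted by a separate, more probabilistic argument on the torus return time that holds uniformly in the environment in an appropriate $L^q$ sense.
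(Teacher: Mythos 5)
Your proposal correctly identifies several of the right ingredients --- Dirichlet moment formulas, a flow/divergence formulation, a min-cut threshold that should equal $\kappa$, and the need to spread the $N^{-d}$ normalization over the torus. But it is missing the mechanism that actually makes the computation go through, and the parts you leave vague are precisely the parts that carry the proof.

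The crucial missing ingredient is the time-reversal invariance of Dirichlet environments (Lemma 1 of \cite{Sabot}, or Proposition 1 of \cite{Sabot-Tournier}): if $\w$ has law $\P^{(\alpha)}_N$, the time-reversed environment $\check\w(x,y)=\pi_N^\w(y)\w(y,x)/\pi_N^\w(x)$ has law $\P^{(\check\alpha)}_N$ with $\check\alpha_{(x,y)}=\alpha_{(y,x)}$. The point of this is that it lets one trade the stationary measure --- which is not an explicit function of $\w$, and hence cannot be integrated directly against the Dirichlet law, which is the main obstruction to both of the routes you propose --- for explicit monomials. Concretely, for any $\theta:E_N\to\R_+$ the paper uses the identity $\check\w^{\check\theta}/\w^\theta = \pi_N^{\dive(\theta)}$, so that \emph{after} applying the arithmetico-geometric inequality
$$
f_N^p=\Bigl(\tfrac{\pi_N^\w(0)}{\frac{1}{N^d}\sum_y\pi_N^\w(y)}\Bigr)^p
\le \prod_{y\in T_N}\Bigl(\tfrac{\pi_N^\w(0)}{\pi_N^\w(y)}\Bigr)^{p/N^d},
$$
one gets $f_N^p\le \check\w^{\check\theta}/\w^\theta$ for any flow $\theta$ with $\dive(\theta)=\frac{p}{N^d}\sum_y(\delta_0-\delta_y)$. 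From there the expectation is an explicit product of Gamma ratios, which is bounded uniformly in $N$ precisely because the flow can be chosen with bounded $L^2$ norm (this is where $d\ge 3$ enters) and dominated by the capacities. Your Kirchhoff/arborescence route does not have this structure: the arborescence sum is a sum of monomials, not a single one, so the Dirichlet integral does not factorize, and the combinatorics you flag as a difficulty is in fact a dead end.

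The second gap is the extraction of the exact threshold $\kappa$. You say the $\kappa$-threshold ``should enter as one hopes,'' but without a concrete mechanism. The paper's device is the inequality $1 \le (2d)^\kappa\sum_i \w(0,e_i)^\kappa$, inserted before the Dirichlet integration, which effectively raises the weight on one edge $(0,e_i)$ to $\alpha_i+\kappa$ and hence raises the min-cut value of the modified weight function $\alpha^{(i)}$ to exactly $\kappa$; this is what allows the dominating capacity $\theta_N(e)\le \frac{p}{\kappa}\alpha^{(i)}(e)$ to support flows of strength $p$ for all $p<\kappa$. Without this trick, the bare min-cut of $\alpha$ would give a strictly smaller exponent and not the optimal range $[1,\kappa)$. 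Finally, the $N^{-d}$ normalization is not handled by a ``separate probabilistic argument on the torus return time'' as you suggest, but falls out automatically from the arithmetico-geometric step above; a return-time argument is not needed and would be harder to make uniform over environments.
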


Once this lemma is proved, the proof of Theorem~\ref{main-result}
is routine argument; cf., for example,~\cite{Sznitman-ten}, pages
18 and 19. Indeed, we consider $\P^{(\alpha)}_N$ and $\Q^{(\alpha)}_N$ as
probability
measures on $N$-periodic environments. Obviously, $\P^{(\alpha)}_N$ converges
weakly to the probability measure $\P^{(\alpha)}$. By construction,
$\Q
^{(\alpha)}_N$ is
an invariant probability measure for the process of the
environment viewed from the particle. Since $\W$ is compact, we
can find a subsequence $N_k$ such that $\Q^{(\alpha)}_{N_k}$
converges weakly
to a probability measure $\Q^{(\alpha)}$ on $\W$. The probability
$\Q
^{(\alpha)}$ is
invariant for the process viewed from the particle, as a
consequence of the invariance of $\Q^{(\alpha)}_N$. Let $g$ be a continuous
bounded function on $\W$: we have for $p$ such that $1<p<\kappa$
and $q={p\over p-1}$
\begin{eqnarray*}
\biggl\vert\int g d\Q^{(\alpha)}\biggr\vert&= & \biggl\vert
\lim_{k\to\infty} \int g f_{N_k} d\P^{(\alpha)}_{N_k}\biggr\vert
\\
&\le& \limsup_{k\to\infty}\biggl(\int\vert g\vert^q d\P^{(\alpha)}_{N_k}
\biggr)^{1/q}\biggl(\int f_{N_k}^{p} d\P^{(\alpha)}_{N_k}\biggr)^{1/ p}
\\
&\le& c_p\|g\|_{L_q(\P^{(\alpha)})},
\end{eqnarray*}
where
\[
c_p=\sup_{N\in\N} \| f_N\|_{L_p(\P^{(\alpha)}_N)} <\infty.
\]
As a consequence, $\Q^{(\alpha)}$ is absolutely continuous with
respect to
$\P^{(\alpha)}$ and
\[
\biggl\|{d\Q^{(\alpha)}\over d\P^{(\alpha)}}\biggr\|_{L_p(\P^{(\alpha)})}\le c_p.
\]
The uniqueness of $\Q^{(\alpha)}$ is classical and proved, for example,
in~\cite{Sznitman-ten}, page 11.

\begin{pf*}{Proof of Lemma \protect\ref{main-lemma}}
The proof is divided into three steps. The first step prepares the
application of the property
of ``time reversal invariance'' (Lemma 1 of~\cite{Sabot} or
Proposition 1 of~\cite{Sabot-Tournier}).
The second step is a little trick to increase the weights in order to
get the optimal
exponent. The third step makes a crucial use of the
``time-reversal invariance''
and uses a lemma of the type ``max-flow
min-cut problem'' proved in the next section.

\textit{Step} 1: Let $(\omega _{x,y})_{x\sim y}$ be in $\W_N$. The
time-reversed environment is defined by
\[
\check w_{x,y}=\pi_N^\omega (y) \omega _{y,x}{1\over\pi_N^\omega (x)}
\]
for $x$, $y$ in $T_N$, $x\sim y$. At each point $x\in T_N$
\[
\sum_{\ue=x}\alpha(e)=\sum_{\oe=x}\alpha(e) =\sum_{j=1}^{2d}
\alpha_j.
\]
It implies by Lemma 1 of
\cite{Sabot} that if $(\omega _{x,y})$ is distributed according to
$\P
^{(\alpha)}$,
then~$\check w$ is distributed according to
$\P^{(\check\alpha)}$ where
\[
\forall(x,y)\in E_N, \qquad\check\alpha_{(x,y)}=\alpha_{(y,x)}.
\]
Let $p$ be a real, $1\le p<\kappa$
%
\begin{eqnarray}\label{inequality}
(f_N)^p&=&
(N^{d}\pi_N^\omega (0))^p \nonumber
\\
&=& \biggl({\pi_N^\omega (0)\over{1/N^d}\sum_{y\in T_N} \pi_N^\omega (y)}\biggr)^p
\\
&\le& \prod_{y\in T_N}\biggl ({\pi_N^\omega (0)\over
\pi_N^\omega (y)}\biggr)^{p/N^d},\nonumber
\end{eqnarray}
where in the last inequality we used the arithmetico-geometric
inequality.
If $\theta:E_N\rightarrow\R_+$, we define $\check
\theta$ by
\[
\check\theta_{(x,y)}=\theta_{(y,x)}\qquad \forall x\sim y.
\]
For two functions $\gamma$ and $\beta$ on $E_N$ (resp., on
$T_N$), we write $\gamma^\beta$ for $\prod_{e\in E_N}
\gamma(e)^{\beta(e)}$ (resp., $\prod_{x\in T_N}
\gamma(x)^{\beta(x)}$). We clearly have
%
\begin{eqnarray}\label{quotient}
{\check\omega ^{\check\theta}\over\omega ^{\theta}}&=&
\prod_{e\in E_N} {( \omega (e)\pi_N(\ue)\pi_N(\oe)^{-1})^{\theta
(e)}\over
\omega _e^{\theta(e)}}
\nonumber\\
&=&\prod_{x\in T_N} \pi_N(x)^{\sum_{e, \ue=x}\theta(e)-\sum
_{e,\oe=x}
\theta(e)}
\\
&=& \pi_N^{\dive(\theta)}.\nonumber
\end{eqnarray}
Hence, for all $\theta:E_N\mapsto\R_+$
such that
%
\begin{equation}\label{dive-theta}
\dive(\theta)={p\over N^d}\sum_{y\in
T_N}(\delta_0-\delta_y)
\end{equation}
we have, using (\ref{inequality}) and (\ref{quotient}),
%
\begin{equation}\label{fN}
f_N^p\le
{\check\omega ^{\check\theta}\over\omega ^{\theta}}.
\end{equation}

\textit{Step 2:}
Considering that $1=\sum_{\|e\|=1}\omega (0,e)$, we have
\[\label{1}
1=1^\kappa\le(2d)^\kappa\sum_{i=1}^{2d}\omega (0,e_i)^\kappa.
\]
Hence, we get
\[
\E^{(\alpha)}(f_N^p)\le (2d)^\kappa\sum_{i=1}^{2d}
\E^{(\alpha)}(\omega (0,e_i)^\kappa f_N^p).
\]
Hence, we need now to prove that for all
$i=1, \ldots, 2d,$
%
\begin{equation}
\sup_{N\in\N} \E^{(\alpha)}(\omega (0,e_i)^\kappa f_N^p)<\infty.
\end{equation}
Considering (\ref{fN}), we need to prove that for all
$i=1, \ldots, 2d,$ we can find a sequence $(\theta_N)$, where
$\theta_N:E_N\mapsto\R_+$ satisfies (\ref{dive-theta}) for all
$N$, such that
%
\begin{equation}\label{sup-alpha}
\sup_{N\in\N} \E^{(\alpha)}\biggl(
\omega _{(0,e_i)}^\kappa{\check\omega ^{\check\theta_N}\over
\omega ^{\theta_N}}
\biggr)<\infty.
\end{equation}

\textit{Step 3:} This is related to the max-flow min-cut
problem; cf., for example,~\cite{Lyons-Peres}, Section 3.1 or \cite
{Ford-Fulkerson}.
Let us first recall the notion of
minimal cut-set sums on the graph $G_{\Z^d}$.
A cut-set between $x\in\Z^d$ and $\infty$ is a subset $S$ of $E$
such that
any infinite simple directed path (i.e., an infinite directed path
that does not pass twice by the same vertex) starting from $x$
must pass through one (directed) edge of $S$. A~cut-set which is minimal
for inclusion is necessarily of the form
%
\begin{equation}\label{min-cut-set}
S=\partial_+(K)=\{e\in E, \ue\in K, \oe\in K^c\},
\end{equation}
where $K$ is a finite subset of $\Z^d$ containing $x$ such that
any $y\in K$ can be reached by a directed path in $K$ starting at
$x$. Let $(c(e))_{e\in E}$ be a set of nonnegative reals called
the capacities. The minimal cut-set sum between 0 and $\infty$ is
defined as the value
\[
m((c))=\inf\{c(S), \mbox{$S$ is a cut-set separating $0$ and
$\infty$}\},
\]
where $c(S)=\sum_{e\in S} c(e)$. Observe that the infimum can be
taken only on minimal cut-set, that is, cut-set of the form
(\ref{min-cut-set}).

The proof uses the following lemma, whose proof is deferred to the
next section since it is of a different nature.
\begin{lemma}\label{L2-max-flow}
Let $d\ge3$.
Let $(c(e))_{e\in E}$ be such that
\[
\inf_{e\in E} c(e) >0;\qquad \sup_{e\in E} c(e) <\infty.
\]
There exists a constant $c_1>0$ such that for $N$ large enough
there exists a function $\theta_N:E_N\mapsto\R_+$ such that
%
\begin{eqnarray}\label{flow-equation}
\dive(\theta_N)&=&m((c)){1\over N^d}\sum_{x\in T_N}(\delta_0 -
\delta_x),
\nonumber
\\[-8pt]
\\[-8pt]
\nonumber
\|\theta_N\|^2_2&=&\sum_{e\in E_N}\theta_N(e)^2<c_1
\end{eqnarray}
and such that
%
\begin{equation} \label{domination}
\theta_N(e)\le c(e)\qquad \forall e\in E_N,
\end{equation}
when we identify $E_N$ with the edges of $E$ such that $\ue\in[-N/2,N/2[^d$.
\end{lemma}
%

The strategy now is to use this result to find a sequence
$(\theta_N)$ which satisfies~(\ref{sup-alpha}).
Let $(\alpha^{(i)}(e))_{e\in E}$ be the weights obtained by increasing the
weight $\alpha$ by $\kappa$ on the edge $(0,e_i)$, and leaving the
other values unchanged
\[
\alpha^{(i)}(e)=\cases{
\alpha^{(i)}(e)=\alpha(e),\qquad $\mbox{if $e\neq(0,e_i)$,}$\vspace*{2pt}\cr
\alpha^{(i)}((0,e_i))=\alpha((0,e_i))+\kappa=\alpha_i+\kappa.&}
\]
Let us first note that for all $i=1, \ldots, 2d,$
%
\begin{equation}\label{mincut}
m\bigl(\bigl(\alpha^{(i)}\bigr)\bigr)\ge\kappa.
\end{equation}
Take $i=1,\ldots, d$: if $S$ contains the edge $(0,e_i)$, then
$\alpha^{(i)}(S)\ge\alpha^{(i)}_{(0,e_i)}\ge\kappa$. Otherwise,
for all $j=1, \ldots,d$, $j\neq i$, $S$ must intersect the paths
$(k e_j)_{k\in\N}$, $(-ke_j)_{k\in\N}$, $(0,e_i, (e_i+k
e_j)_{k\in\N})$, $(0,e_i, (e_i-k e_j)_{k\in\N})$. These
intersections are disjoints, and it gives two edges with weights
$(\alpha_j)$ and two edges with weights $(\alpha_{j+d})$.
Moreover, $S$ must intersect the paths $(k e_i)_{k\in\N}$,
$(-ke_i)_{k\in\N}$. It gives one edge with weight~$\alpha_i$ and
one with weight $\alpha_{i+d}$. Hence,
\[
\alpha^{(i)}(S)\ge2\Biggl(\sum_{j=1}^{2d} \alpha_j\Biggr)
-(\alpha_i+\alpha_{i+d})\ge\kappa.
\]
The same reasoning works for $i=d+1, \ldots, 2d$.

Let us now prove (\ref{sup-alpha}) for $i=1$; the same reasoning
works for all. We apply Lemma~\ref{L2-max-flow} with
$c(e)=\alpha^{(1)}(e)$. It gives for $N$ large enough a function
$\tilde\theta_N:E_N\mapsto\R_+$ which satisfies
\[
\dive(\tilde\theta_N)={m(\alpha^{(1)})\over N^d}\sum_{y\in
T_N}(\delta
_0-\delta_y),
\]
and $\tilde\theta_N(e)\le\alpha^{(1)}(e)$ and with bounded
$L_2$ norm.
It implies that
$\theta_N={p \over m(\alpha^{(1)})} \tilde\theta_N$ satisfies
\[
\dive(\theta_N)={p \over N^d}\sum_{y\in T_N}(\delta_0-\delta_y),
\]
and by (\ref{mincut}) that $\theta_N(e)\le{p\over\kappa} \tilde
\theta
_N(e)\le{p\over\kappa}\alpha^{(1)}(e)$ and that $\theta_N$
has a bounded $L_2$-norm.

Let $r,q$ be positive reals such that ${1\over r}+{1\over q}=1$
and $pq<\kappa$. Using H\"{o}lder inequality and Lemma 1 of \cite
{Sabot}, we get
\begin{eqnarray*}
\E^{(\alpha)}\biggl(\omega (0,e_1)^\kappa{\check\omega ^{\check\theta
_N}\over
\omega ^{\theta_N}} \biggr)
&\le&\E^{(\alpha)}( \omega (0,e_1)^{q\kappa}\omega ^{-q
\theta_N})^{1/q} \E^{(\alpha)}( \check\omega ^{r\check
\theta_N})^{1/r}
\\
&=& \E^{(\alpha)}(\omega (0,e_1)^{q\kappa} \omega ^{-q \theta_N})^{1/q}
\E^{(\check\alpha)}( \omega ^{r\check\theta_N})^{1/r}.
\end{eqnarray*}
We set $\alpha(x)=\sum_{\ue=x}\alpha(e)$ and
$\theta_N(x)=\sum_{\ue=x} \theta_N(e)$.
Observe that $\alpha(x)=\check\alpha(x)=\sum_{j=1}^{2d} \alpha_j$ for
all $x\in T_N$.
We set $\alpha_0=\sum_{j=1}^{2d} \alpha_j$. For any function $\xi
:E_N\mapsto\R$ we have
%
\begin{equation}\label{xi}
\E^{(\alpha)}(\omega ^\xi)
=
\prod_{x\in T_N} \biggl({\prod_{i=1}^{2d} \Gamma(\alpha_i+\xi
(x,x+e_i))\over
\Gamma(\sum_{i=1}^{2d} \alpha_i+\xi(x,x+e_i))}{\Gamma(\alpha
_0)\over
\prod_{i=1}^{2d} \Gamma(\alpha_i)}\biggr)
\end{equation}
if $\xi(x,x+e_i)>-\alpha_i$ for all $x\in T_N$, $i=1, \ldots, 2d$, and
$+\infty$, otherwise.
Indeed, using the explicit form of Dirichlet distribution (\ref
{simplex}) and the independence at
each site, we get for any $\xi:E_N\mapsto\R$,
\[
\E^{(\alpha)}(\omega ^\xi)
=\biggl({\Gamma(\alpha_0)\over\prod_{i=1}^{2d} \Gamma(\alpha
_i)}\biggr)^{\vert
T_N\vert}
\prod_{x\in T_N} \int\prod_{i=1}^{2d} x_i^{\alpha_i+\xi(x,x+e_i)-1}
\,dx_1\cdots \,dx_{2d-1}
\]
where the last integrals are on the simplex $\{(x_1, \ldots, x_{2d}),
x_i>0, \sum x_i =1\}$. These integrals are Dirichlet integrals which
are finite if and only if $\alpha_i+\xi(x,x+e_i)>0$ for all $x$ and
$i$. There explicit value [cf. (\ref{simplex})] gives formula (\ref{xi}).
A straightforward application of (\ref{xi}) gives
\begin{eqnarray*}
&&\E^{(\alpha)}(\omega (0,e_1)^{q\kappa} \omega ^{-q \theta_N})
\\
&&\qquad= \Biggl({\prod_{{e\in E_N\atop e\neq(0,e_1)}} \Gamma(\alpha(e)-q\theta
_N(e))\over
\prod_{{x\in T_N\atop x\neq0}}
\Gamma(\alpha_0-q\theta_N(x))}\Biggr)
\biggl({\Gamma(\alpha_1+q\kappa-q\theta_N((0,e_1)))\over\Gamma(\alpha
_0+q\kappa-q\theta_N(0))}\biggr)\\
&&\quad\qquad{}\times\biggl({\prod_{{x\in T_N}}\Gamma(\alpha_0)\over\prod_{{e\in E_N}}\Gamma
(\alpha(e))}\biggr).
\end{eqnarray*}
Observe that all the terms are well defined since $q\theta_N \le
{pq\over\kappa} \alpha^{(1)}$ and $qp<\kappa$.
We have the following inequalities:
\[
\alpha_1\biggl(1-{qp\over\kappa}\biggr)\le
\alpha_1+q\kappa-q\theta_N((0,e_1)) \le
\alpha_1+q\kappa
\]
and
\[
\alpha_0\biggl(1-{qp\over\kappa}\biggr)\le
\alpha_0+q\kappa-q\theta_N(0) \le
\alpha_0 +q\kappa,
\]
which imply that
\begin{eqnarray*}
&&\E^{(\alpha)}(\omega (0,e_1)^{q\kappa} \omega ^{-q \theta_N})^{1/q}
\\
&&\qquad\le
A_1 \Biggl({\prod_{{e\in E_N\atop e\neq(0,e_1)}} \Gamma(\alpha(e)-q\theta
_N(e))\over
\prod_{{x\in T_N\atop x\neq0}}
\Gamma(\alpha_0-q\theta_N(x))}\Biggr)^{1/q}
\Biggl({\prod_{{x\in T_N\atop x\neq0}}\Gamma(\alpha_0)\over\prod_{{e\in
E_N\atop e\neq(0,e_1)}}\Gamma(\alpha(e))}\Biggr)^{1/q},
\end{eqnarray*}
where
\[
A_1=\biggl({\Gamma(\alpha_0)\over\Gamma(\alpha_1)}{\sup_{s\in[\alpha
_1(1-{qp/\kappa}), \alpha_1+q\kappa]} \Gamma(s),
\over\inf_{s\in[\alpha_0(1-{qp/\kappa}), \alpha_0+q\kappa]}
\Gamma(s)}\biggr)^{1/q}.
\]
Similarly, we get
\begin{eqnarray*}
\E^{(\check\alpha)}( \omega ^{r\check
\theta_N})
&=&
\biggl({\prod_{{e\in E_N}} \Gamma(\check\alpha(e)+r\check\theta
_N(e))\over
\prod_{{x\in T_N}} \Gamma(\check\alpha(x)+r\check\theta
_N(x))}\biggr)\biggl({\prod
_{{x\in T_N}}\Gamma(\check\alpha(x))\over\prod_{{e\in
E_N}}\Gamma(\check\alpha(e))}\biggr)
\\
&=&
\biggl({\prod_{{e\in E_N}} \Gamma(\alpha(e)+r\theta_N(e))\over
\prod_{{x\in T_N}} \Gamma(\alpha_0+r\check\theta_N(x))}\biggr)\biggl({\prod
_{{x\in
T_N}}\Gamma(\alpha_0)\over\prod_{{e\in
E_N}}\Gamma(\alpha(e))}\biggr),
\end{eqnarray*}
where in the last line we used that $\check\alpha((x,y))=\alpha((y,x))$
and $\check\theta((x,y))=\theta((y,x))$ and that
$\check\alpha(x)=\sum_{\overline e=x} \alpha_e=\alpha(x)=\alpha
_0$ for
all $x$.
Note that $\check\theta(0)=\theta(0)-p$ and $\check\theta
(x)=\theta
(x)+{p\over N^d}$ for $x\neq0$, thanks to
(\ref{dive-theta}). We have the following inequalities:
\begin{eqnarray*}
\alpha_1&\le&\alpha((0,e_1))+r\theta_N((0,e_1))\le\alpha
_1(1+r)+r\kappa,
\\
\alpha_0&\le&\alpha(0)+r\check\theta_N(0)\le\alpha_0(1+r)+r\kappa.
\end{eqnarray*}
This gives that
\begin{eqnarray*}
&&\E^{(\check\alpha)}( \omega ^{r\check
\theta_N})^{1/ r}\\
&&\qquad\le
A_2\Biggl({\prod_{{e\in E_N\atop e\neq(0,e_1)}} \Gamma(\alpha(e)+r\theta
_N(e))\over
\prod_{{x\in T_N\atop x\neq0}} \Gamma(\alpha_0+r\theta
_N(x)+{pr/
N^d})}\Biggr)^{1/r}\Biggl({\prod_{{x\in T_N\atop x\neq0}}\Gamma(\alpha_0)\over
\prod_{{e\in E_N\atop e\neq(0,e_1)}}\Gamma(\alpha(e))}\Biggr)^{1/r},
\end{eqnarray*}
where
\[
A_2=\biggl({\Gamma(\alpha_0)\over\Gamma(\alpha_1)}{\sup_{s\in[\alpha_1,
\alpha_1(1+r)+r\kappa]} \Gamma(s)
\over\inf_{s\in[\alpha_0, \alpha_0(1+r)+r\kappa]} \Gamma(s)}\biggr)^{1/r}.
\]
Combining these inequalities it gives
\begin{eqnarray*}
&&\E^{(\alpha)}\biggl(\omega (0,e_1)^\kappa{\check\omega ^{\check
\theta_N}\over
\omega ^{\theta_N}}\biggr)
\\
&&\qquad\le {A_1 A_2}\exp\Biggl(\mathop{\sum_{e\in
E_N}}_{e\neq(0,e_1)}\nu(\alpha(e), \theta_N(e))-\mathop{\sum_{x\in
T_N}}_{ x\neq0}\tilde\nu(\alpha_0,\theta_N(x))\Biggr),
\end{eqnarray*}
where
\[
\nu(\alpha, u)={1\over r} \ln\Gamma(\alpha+r u)+{1\over
q}\ln\Gamma(\alpha-q u)-\ln\Gamma(\alpha)
\]
and
\[
\tilde\nu(\alpha,u)={1\over r} \ln\Gamma\biggl(\alpha+r u+{pr\over
N^d}\biggr)+{1\over
q}\ln\Gamma(\alpha-q u)-\ln\Gamma(\alpha).
\]
Let $\underline\alpha=\min\alpha_i$, $\overline\alpha=\max\alpha_i$.
By Taylor's inequality and since $\underline\alpha\le\alpha(e)\le
\overline\alpha$ for all $e\in E_N$,
$q\theta_N(e)\le{qp\over\kappa} \alpha(e)$ for all $e\neq(0,e_1)$
and $qp<\kappa$, we can find a constant $c>0$
such that for all $e\neq(0,e_1)$,
\[
|\nu(\alpha(e),\theta(e))|\le
c\theta(e)^2
\]
and for all $x\neq0$,
\[
|\tilde\nu(\alpha_0,\theta(x))|\le
c\biggl(\theta(x)^2+{p\over N^d}\biggr).
\]
Hence, we get a positive
constant $C>0$
independent of $N>N_0$ such that
\[
\E^{(\alpha)}\biggl(\omega (0,e_1)^\kappa{\check\omega ^{\check\theta
_N}\over
\omega ^{\theta_N}}\biggr)\le\exp\biggl( C\biggl(\sum_{e\in E_N}
\theta_N(e)^2+\sum_{x\in T_N} \theta_N(x)^2\biggr)\biggr).
\]
Thus (\ref{sup-alpha}) is true, and this proves Lemma
\ref{main-lemma}.
\end{pf*}

\section{\texorpdfstring{Proof of Lemma \protect\ref{L2-max-flow}}{Proof of Lemma 2}}\label{sec4}

The strategy is to apply the max-flow min-cut theorem (cf. \cite
{Lyons-Peres}, Section 3.1 or
\cite{Ford-Fulkerson}) to an appropriate choice of capacities on the graph~$G_N$.
We first need a generalized version of the max-flow min-cut
theorem.
\begin{proposition}\label{max-flow}
Let $G=(V,E)$ be a finite directed graph. Let $(c(e))_{e\in E}$ be
a set of nonnegative reals (called capacities). Let $x_0$ be a
vertex and $(p_x)_{x\in V}$ be a set of nonnegative reals. There
exists a nonnegative function $\theta:E\mapsto\R_+$ such that
%
\begin{eqnarray} \label{dive-theta1}
&\displaystyle\dive(\theta)=\sum_{x\in V} p_x(\delta_{x_0}-\delta_x)&,
\\
\label{capacity}
&\displaystyle\forall e\in E, \qquad \theta(e)\le c(e),&
\end{eqnarray}
if and only if for all subset $K\subset V$ containing $x_0$ we
have
%
\begin{equation}\label{cutset-inequality}
c(\partial_+K)\ge\sum_{x\in K^c} p_x,
\end{equation}
where $\partial_+K=\{e\in E, \ue\in K, \oe\in K^c\}$ and
$c(\partial_+K)=\sum_{e\in\partial_+K}c(e)$.
The same is true if we restrict condition (\ref{cutset-inequality})
to the subsets $K$ such that any
$y\in K$ can be reached from 0 following a directed path in $K$.
\end{proposition}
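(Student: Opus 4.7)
The plan is to reduce Proposition~\ref{max-flow} to the classical max-flow min-cut theorem on a finite directed graph with a single source and a single sink. The easy direction, namely (\ref{dive-theta1}) and (\ref{capacity}) implying (\ref{cutset-inequality}), is immediate: for any $K\ni x_0$, summing the divergence identity over $x\in K^c$ telescopes to
$$
\theta(\partial_+K)-\theta(\partial_-K)=\sum_{x\in K^c}p_x,
$$
where $\partial_-K=\{e\in E:\ue\in K^c,\ \oe\in K\}$ and $x_0\in K$ makes the $\delta_{x_0}$ contributions on the right-hand side vanish. Since $\theta\ge 0$ and $\theta\le c$, this yields $c(\partial_+K)\ge \sum_{x\in K^c}p_x$.

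For the converse, I would augment $G$ into a graph $\tilde G$ by adjoining a single super-sink $t$ together with, for each $y\in V\setminus\{x_0\}$, a new edge $(y,t)$ of capacity $p_y$. (The value $p_{x_0}$ plays no role, since $x_0\in K$ in (\ref{cutset-inequality}).) I would then apply the classical max-flow min-cut theorem (cf.~\cite{Lyons-Peres,Ford-Fulkerson}) to $\tilde G$ between source $x_0$ and sink $t$. Every minimal cut separating $x_0$ from $t$ in $\tilde G$ has the form $\partial_+^{\tilde G}K$ for some $K\ni x_0$, $t\notin K$, and its capacity equals
$$
c(\partial_+K)+\sum_{y\in K\setminus\{x_0\}}p_y,
$$
which, by (\ref{cutset-inequality}), is bounded below by $\sum_{y\in V\setminus\{x_0\}}p_y$. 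The min-cut value in $\tilde G$ is therefore exactly $\sum_{y\in V\setminus\{x_0\}}p_y$, so a max flow is forced to saturate every edge $(y,t)$, carrying flux exactly $p_y$. Its restriction to $E$ produces the desired $\theta$.

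For the last assertion of the proposition, I would observe that for any $K\ni x_0$, the subset $K'\subseteq K$ consisting of vertices reachable from $x_0$ by a directed path lying in $K$ satisfies $\partial_+K'\subseteq\partial_+K$: indeed, if $e\in\partial_+K'$ had $\oe\in K\setminus K'$, then extending a path from $x_0$ to $\ue\in K'\subseteq K$ by $e$ would place $\oe$ in $K'$, a contradiction. Hence $c(\partial_+K')\le c(\partial_+K)$ while $\sum_{x\in (K')^c}p_x\ge \sum_{x\in K^c}p_x$, so verifying (\ref{cutset-inequality}) on the restricted class of reachable $K$'s implies it for all $K$. The only real obstacle is bookkeeping: ensuring the correspondence between cuts in $\tilde G$ and subsets $K\ni x_0$ in $G$ is exact, so that saturation of every edge $(y,t)$ is genuinely forced rather than merely allowed.
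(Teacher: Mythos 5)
Your proposal is correct and follows essentially the same route as the paper: both reduce to the classical max-flow min-cut theorem by adjoining a super-sink with edges of capacity $p_y$, noting the cut correspondence, and concluding from min-cut $=\sum p_y$ that the max flow saturates every sink edge. The only differences are cosmetic (the paper also adds an edge $(x_0,\delta)$, which changes nothing since its saturation cancels in the divergence at $x_0$) and that you explicitly supply the short argument for the final "reachable $K$" reduction, which the paper leaves implicit in its description of minimal cut-sets.
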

\begin{pf}
If $\theta$ satisfies (\ref{dive-theta1}),
then
\[
\sum_{e, \ue\in K, \oe\in K^c} \theta(e)- \sum_{e, \oe\in K, \ue
\in
K^c} \theta(e)
=
\sum_{x\in K} \dive(\theta)(x)=\sum_{x\in K^c} p_x.
\]
It implies (\ref{cutset-inequality}) by (\ref{capacity}) and positivity
of $\theta$.

The reversed implication is an easy consequence of the classical
max-flow min-cut theorem
on finite directed graphs (\cite{Lyons-Peres}, Section 3.1 or \cite
{Ford-Fulkerson}).
Suppose now that~(c) satisfies (\ref{cutset-inequality}). Consider
the new graph
$\tilde G=(V\cup{\delta}, \tilde E)$ defined by
\[
\tilde E=E\cup\{(x,\delta), x\in V\}.
\]
We consider the capacities $(\tilde c(e))_{e\in\tilde E}$ defined by
$c(e)=\tilde c(e)$ for $e\in E$ and $c((x,\delta))= p_x$. The strategy
is to apply the max-flow
min-cut theorem with capacities $\tilde c$ and with source $x_0$ and
sink $\delta$. Any minimal cut-set between
$x_0$ and $\delta$ in the graph $\tilde G$ is of the form
$
\partial^{\tilde G}_+ K
$
where $K\subset V$ is a subset containing $x_0$ but not $\delta$ and
such that any point $y\in K$ can be reached from $x_0$
following a directed path in~$K$. Observe that
\[
\tilde c(\partial^{\tilde G}_+K)= c(\partial^{G}_+K)+ \sum_{x\in K} p_x.
\]
Hence, (\ref{cutset-inequality}) implies
\[
\tilde c( \partial^{\tilde G}_+ K)\ge\sum_{x\in V} p_x.
\]
Thus the max-flow min-cut theorem gives a flow $\tilde\theta$ on
$\tilde G$
between $x_0$ and $\delta$ with strength $\sum_{x\in V} p_x$ and such
that $\tilde\theta\le\tilde c$.
This necessarily implies that $\tilde\theta((x,\delta))=p_x$. The
function $\theta$ obtained by restriction of
$\tilde\theta$ to $E$ satisfies (\ref{capacity}) and (\ref{dive-theta1}).
\end{pf}

\begin{lemma}\label{flowL2} Let $d\ge3$.
There exists a positive constant $C_2>0$, such that for all $N>1$,
and all $x,y$ in $T_N$ there exists a unit flow $\theta$ from $x$
to $y$ (i.e., $\theta:E_N\to\R_+$ and
$\dive(\theta)=\delta_x-\delta_y$) such that
for all $z\in T_N$,
%
\begin{equation}\label{theta-small}
\theta(z)=\sum_{\ue=z}\theta(e)\le1\wedge\bigl(C_2 \bigl(
d(x,z)^{-(d-1)}+d(y,z)^{-(d-1)}\bigr)\bigr).
\end{equation}
%
\end{lemma}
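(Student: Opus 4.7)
The plan is to construct $\theta$ as the \emph{electric flow} on $T_N$ associated with a centered Green's function of the simple random walk. Let $P$ be the SRW transition matrix on $T_N$; since $P$ is doubly stochastic with uniform invariant measure $\pi\equiv 1/N^d$, the centered Green's function
\[
\bar G(x,y)=\sum_{n\ge 0}\bigl[P^n(x,y)-1/N^d\bigr]
\]
converges absolutely by the spectral gap of $P$ and satisfies the discrete Poisson equation $(I-P)\bar G(x,\cdot)=\delta_x-1/N^d$. Setting $\phi=\bar G(x,\cdot)-\bar G(y,\cdot)$, the $1/N^d$ terms cancel in the Poisson equation and one obtains $\sum_{w\sim z}(\phi(z)-\phi(w))=2d(\delta_x(z)-\delta_y(z))$. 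I would then define
\[
\theta(e)=\frac{1}{2d}\bigl[\phi(\ue)-\phi(\oe)\bigr]_+,\qquad e\in E_N,
\]
and a direct calculation using the identity $[a]_+-[-a]_+=a$ gives $\dive(\theta)=\delta_x-\delta_y$, so $\theta$ is indeed a unit flow from $x$ to $y$.

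The bound $\theta(z)\le 1$ follows from \emph{acyclicity} of electric flows: along any directed cycle in $E_N$ the potential $\phi$ cannot strictly decrease, so the support $\{e:\theta(e)>0\}$ contains no directed cycle. A standard path decomposition of an acyclic unit flow writes $\theta$ as a positive combination of path-flows from $x$ to $y$ of total mass $1$, so $\theta(z)\le 1$ at every vertex. For the decay bound, the triangle inequality yields
\[
\theta(z)\le\frac{1}{2d}\sum_{w\sim z}\Bigl(|\bar G(x,z)-\bar G(x,w)|+|\bar G(y,z)-\bar G(y,w)|\Bigr),
\]
so matters reduce to establishing the gradient estimate
\[
|\bar G(x,z)-\bar G(x,w)|\le C\, d(x,z)^{-(d-1)}\qquad\text{for every }w\sim z\text{ with }d(x,z)\ge 1,
\]
uniformly in $N$ (and the symmetric estimate in $y$).

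This gradient estimate is the main technical obstacle. I would obtain it by splitting the sum $\bar G(x,z)-\bar G(x,w)=\sum_n[P^n(x,z)-P^n(x,w)]$ according to the size of $n$ relative to $R^2=d(x,z)^2$ and $N^2$. In the moderate-deviations regime $n\le R^2$, both $P^n(x,z)$ and $P^n(x,w)$ are exponentially small in $R^2/n$, so this range contributes negligibly. In the diffusive regime $R^2\le n\le N^2$, Gaussian heat-kernel bounds give $|P^n(x,z)-P^n(x,w)|\le C\,R\cdot n^{-(d/2+1)}$ (by differentiating the Gaussian in the spatial variable), and summation yields the desired $O(R^{-(d-1)})$; the hypothesis $d\ge 3$ enters precisely here, making the relevant integral converge. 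In the equilibration regime $n\ge N^2$, both $P^n(x,\cdot)$ and $1/N^d$ are within $e^{-cn/N^2}$ of one another, so the subtraction in the definition of $\bar G$ makes this tail negligible. The delicate point is securing the heat-kernel bounds on $T_N$ with constants independent of $N$, which is a classical but non-trivial fact for $d\ge 3$, obtainable by comparing the torus walk with its $\Z^d$-lift on the relevant time scales. Combining the two pointwise bounds then gives $\theta(z)\le 1\wedge\bigl(C_2(d(x,z)^{-(d-1)}+d(y,z)^{-(d-1)})\bigr)$.
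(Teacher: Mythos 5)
Your construction is genuinely different from the paper's. The paper builds $\theta$ explicitly by averaging discretized piecewise-linear paths $[0,u]\cup[u,y]$ over $u$ ranging through the mid-slice $\Delta_y$ of the box $D_y$; the $d(x,z)^{-(d-1)}$ decay then drops out of an elementary volume/dispersion computation (the set of $u$ whose path comes within $O(1)$ of $z$ at level $k$ has measure $\sim (\|y\|/k)^{d-1}$ relative to $|\Delta_y|\sim\|y\|^{d-1}$), with no potential theory needed. You instead take the electric (harmonic) flow attached to the centered Green's function; acyclicity of a gradient flow gives $\theta(z)\le 1$ for free, and the decay is pushed onto a gradient estimate for $\bar G$. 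Both routes can work; yours is arguably more canonical (the electric flow minimizes $\sum_e\theta(e)^2$, which is what Lemma~\ref{L2-max-flow} ultimately needs), while the paper's is entirely self-contained and avoids importing heat-kernel machinery.

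Two concrete points need attention before your argument is complete. First, $\bar G(x,y)=\sum_{n\ge 0}[P^n(x,y)-N^{-d}]$ does \emph{not} converge for even $N$: the torus is then bipartite, $P^n(x,y)$ oscillates and does not tend to $N^{-d}$, so the series is not summable, absolutely or otherwise. You must replace $P$ by a lazy walk (or sum consecutive pairs, or restrict to odd $N$ and handle even $N$ separately) before the Poisson equation and everything downstream make sense; note that lazification rescales but does not otherwise affect the divergence computation. Second, the uniform-in-$N$ gradient estimate $|\bar G(x,z)-\bar G(x,w)|\le C\,d(x,z)^{-(d-1)}$ is the entire analytic content of your proof and is only sketched; the regime-splitting is the right skeleton, but obtaining the on-diagonal/off-diagonal and discrete-gradient heat-kernel bounds on $T_N$ with $N$-independent constants, and handling $d(x,z)\asymp N$, is precisely the work that the paper's elementary path-averaging sidesteps. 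Also, your remark that "$d\ge 3$ enters precisely here, making the relevant integral converge" is misplaced: $\int_{R^2}^\infty n^{-(d+1)/2}\,dn$ already converges for $d\ge 2$. The real use of $d\ge 3$ is in Lemma~\ref{L2-max-flow}, where one needs $\sum_z d(0,z)^{-2(d-1)}<\infty$, i.e.\ $\sum_r r^{d-1}\cdot r^{-2(d-1)}<\infty$, which forces $d\ge 3$.
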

\begin{pf}
By translation and symmetry, we can consider only the case where
$x=0$ and $y\in[N/2, N[^d$ when $T_N$ is identified with
$[0,N[^d$. We construct a flow on $G_{\Z^d}$ supported by the set
\[
D_y=[0,y_1]\times\cdots\times[0,y_d]
\]
as an integral of
sufficiently dispersed path flows. It thus induces by projection a
flow on $T_N$ with the same $L_2$ norm. Let us give some
definitions. A sequence $\sigma=(x_0, \ldots, x_n)$ is a path from
$x$ to $y$ in $\Z^d$ if $x_0=x$, $x_n=y$ and $\|x_{i+1}-x_i\|_1=1$
for all $i=1, \ldots, n$. We say that $\sigma$ is a positive path
if moreover $x_{i+1}-x_i\in\{e_1, \ldots, e_d\}$ for all $i=1,
\ldots, n$. To any path from $x$ to $y$ we can associate the unit
flow from $x$ to $y$ defined by
\[
\theta_\sigma=\sum_{i=1}^n \indic_{(x_{i-1},x_i)}.
\]
For $u\in\R_+$, we define $C_u$ by
\[
C_u=\Biggl\{z=(z_1, \ldots, z_d)\in\R_+^d, \sum_{i=1}^d z_i=u\Biggr\}.
\]
Clearly if $y\in\N^d$ and if $\sigma=(x_0=0, \ldots, x_n=y)$ is a
positive path from 0 to $y$, then $n=\|y\|_1$ and $x_k\in C_k$ for
all $k=0, \ldots, \|y\|$.

Set
\[
\Delta_y=D_y\cap\Biggl\{u=(u_1, \ldots, u_d)\in\R_+^d, \sum_{i=1}^d
u_i={\|y\|_1\over2}\Biggr\}.
\]
For $u\in\Delta_y$, let $L_u$ be the union of segments
\[
L_u=[0,u]\cup[u,y].
\]
We can consider $L_u$ as the continuous path $l_u:[0,\|y\|]\mapsto
D_y$ from 0 to $y$ defined by
\[
\{l_u(t)\}=L_u\cap C_t.
\]
Observe that $u\in D_y$ implies that $l_u(t)$ is nondecreasing on each
coordinate.
There is a canonical way to associate with $l_u$ a discrete
positive path $\sigma_u$ from 0 to $y$ such that
for all $k=0, \ldots, \|y\|$,
%
\begin{equation}\label{distance}
\|l_u(k)-\sigma_u(k)\|\le2d.
\end{equation}
Indeed, let $\tilde l_u(t)$ be defined by taking the integer part
of each coordinate of $l_u(t)$. At jump times of $\tilde l_u(t)$
the coordinates increase at most by 1. We define $\sigma_u(k)$ as
the positive path which follows the successive jumps of $\tilde
l_u(t)$: if at a time $t$ there are jumps at several coordinates,
we choose to increase first the coordinate on $e_1$, then on
$e_2$\ldots We have by construction $k-d\le\|\tilde l_u(k)\|\le k$, hence
$\tilde l_u(k) \in\{\sigma_u(k-d), \ldots, \sigma_u(k)\}$,
so $\|\sigma_u(k)-\tilde l_u(k)\|\le d$. Since $\|l_u(k)-\tilde
l_u(k)\|\le d$ it gives~(\ref{distance}). We then define
\[
\theta_u=\theta_{\sigma_u},
\]
and
\[
\theta={1\over|\Delta_z |}\int_{\Delta_z} \theta_u \,du
\]
(where $\vert\Delta_z\vert=\int_{\Delta_z} \,du$), which is a unit flow
from 0 to
$y$. Clearly, $\theta(z)\le1$ for all $z\in T_N$.
For $k=0, \ldots, \|y\|_1$ and $z\in\hhh_k$, we have
\[
\theta(z)\le{1\over\vert\Delta_z\vert}\int_{\Delta_z}
\indic_{\|l_u(k)-z\|\le2d} \,du.
\]
Hence, we have for $k$ such that $1<k\le{\|y\|\over2}$,
\[
\theta(z)\le{1\over\vert\Delta_z\vert}\int_{\Delta_z}
\indic_{\|u-z{\|y\|/(2k)}\|\le{d\|y\|/ k}} \,du.
\]
Since $y\in[N/2,N[^d$, there is a constant $C_2>0$ such that
\[
\theta(z)\le C_2 k^{-(d-1)}.
\]
Similarly, if $\| y\| /2 \le k< \|y\|$,
\[
\theta(z)\le C_2 (\|y\|-k)^{-(d-1)}.
\]
Moreover $\theta(z)$ is null on the complement of $D_y$. By projection
on $G_N$ it gives a function on $E_N$
with the right properties. This proves Lemma~\ref{flowL2}.
\end{pf}

We are ready to prove Lemma~\ref{L2-max-flow}. Let $(c(e))$ be such
that $0<C'<c(e)<C''<\infty$. For all $y\in
T_N$ we denote by $\theta_{0,y}$ a unit flow from 0 to $y$
satisfying the conditions of Lemma~\ref{flowL2}. We set
\[
\tilde\theta_N={m(c)\over N^d}\sum_{y\in T_N} \theta_{0,y}.
\]
The strategy is to apply proposition~\ref{max-flow} to a set of capacities
constructed from $\tilde\theta_N$ and $c$.
Clearly,
%
\begin{eqnarray}\label{dive}
\dive(\tilde\theta_N)={m(c)\over N^d}\sum_{y\in T_N}(\delta
_0-\delta_y),
\end{eqnarray}
and by simple computation we get that
%
\begin{equation}\label{theta-petit}
\tilde\theta_N(z)\le C_2 m(c)\biggl(1\wedge\bigl(d(0,z)^{-(d-1)}\bigr)+{d2^d\over
N^{d-1}}\biggr).
\end{equation}
This implies that
\begin{eqnarray*}
\sum_{z\in T_N}\tilde\theta_N^2(z)&\le&
2C_2^2 m(c)^2\sum_{z\in T_N} \biggl( 1\wedge\bigl(d(0,z)^{-2(d-1)}\bigr) +
{(d2^d)^2\over N^{2(d-1)}}\biggr)
\\
&\le&
2C_2^2 m(c)^2\biggl((d2^d)^2 N^{-d+2} + \sum_{z\in\Z^d} 1\wedge
\bigl(d(0,z)^{-2(d-1)}\bigr)\biggr).
\end{eqnarray*}
Considering that the number of points $z$ at distance $k$ from 0 is
smaller than $2d(2k+1)^{d-1}$, we get that
\[
\sum_{z\in T_N}\tilde\theta_N^2(z) \le
2C_2^2 m(c)^2\Biggl( (d2^d)^2 N^{-d+2} + 1+\sum_{k=1}^\infty k^{-2d+2}
(2d)(2k+1)^{d-1} \Biggr).
\]
Hence,
\[
\sum_{z\in T_N}\tilde\theta_N^2(z)\le2C_2^2m^2(c) (d2^d)\Biggl(1+\sum
_{k=1}^\infty k^{-(d-1)}\Biggr)+2C_2^2m^2(c)(d2^d)^2N^{-(d-2)},
\]
and there is a constant $C_3>0$ depending solely on $C', C'', d$ such that
\[
\sum_{z\in T_N}\tilde\theta_N^2(z)\le C_3,\qquad \sum_{e\in E_N}\tilde
\theta
_N^2(e)\le C_3.
\]
By (\ref{dive}) we know that for all $K\subset T_N$ containing $0$
we have
\[
\sum_{e\in E_N, \ue\in K, \oe\in K^c} \tilde\theta_N(e)-\sum
_{e\in
E_N, \oe\in K, \ue\in K^c} \tilde\theta_N(e)=m(c){\vert
K^c\vert\over N^d},
\]
hence,
%
\begin{equation}\label{cut-set}
\tilde\theta_N(\partial_+ K)\ge m(c){\vert K^c\vert\over N^d}.
\end{equation}
The strategy is to modify $\tilde\theta_N$ locally around 0 in order to
make it lower or equal to~$c$ but large enough to be able to apply
Proposition~\ref{max-flow}. Let us fix some notations. For a
positive integer $r$, $B_E(x_0,r)$ denotes the set of edges
\[
B_E(x_0,r)=\{e\in E, \ue\in B(x_0,r), \oe\in B(x_0,r)\}
\]
and
\[
\underline B_E(x_0,r)=\{e\in E, \ue\in B(x_0,r) \}.
\]
By (\ref{theta-petit}), there exist some positive integer $\eta_0$ and
$\tilde N_0$, such that for all
$N\ge\tilde N_0$ and $e\notin B_E(0,\eta_0)$, we have
%
\begin{equation}
\label{1.1}
\vert\tilde\theta_N(e)\vert\le{C'\over2}.
\end{equation}
Choose now $\eta_1>\eta_0$ such that
%
\begin{equation}
\label{1.2}
\eta_1-\eta_0\ge4{m(c)\over C'} +2 .
\end{equation}
Finally we can find an integer $N_0\ge\max(\tilde N_0, 2\eta_1)$ large
enough to satisfy
%
\begin{equation}
\label{1.3}
N_0^d\ge m(c) { \vert B(0,\eta_1)\vert\over C'}.
\end{equation}
We consider $(\tilde c_N(e))_{e\in E_N}$ defined by
\[
\cases{
\tilde c_N(e)= c(e), &\quad $\mbox{if $\ue$ or $\oe\in
B(0, \eta_1)$},$\vspace*{2pt}\cr
\tilde c_N(e)=\tilde\theta_N(e),&\quad $\mbox{otherwise}.$}
\]
Note that, thanks to (\ref{1.1}), for all $e\in E_N$, $\tilde
c_N(e)\le c(e)$
when we identify $E_N$ with the edges of
$E$, which starts in $[-N/2,N/2[^d$.
In the rest of the proof we prove that for all $N\ge N_0$ and for all
$K\subset T_N$ that contains 0 and which are such that any $y\in K$ can
be reached from
0 following a directed path in $K$, we have
%
\begin{equation}\label{K}
\tilde c_N(\partial_+K)\ge m(c){\vert K^c\vert\over N^d}.
\end{equation}
By application of Proposition~\ref{max-flow}, it would give a flow
$\theta_N$, which satisfies (\ref{flow-equation}) and~(\ref{domination}), and with a bounded $L_2$ norm, indeed,
\[
\sum_{e\in E_N} \theta_N(e)^2 \le C_3+ \vert B_E(0,\eta_1)\vert(C'')^2.
\]
We only need to check inequality (\ref{K}) for $K$ such that $K^c$ has a
unique connected component. Indeed, if $K^c$ has several connected
components, say $R_1, \ldots, R_k,$
then
\[
\partial_-R_i=\{e\in E_N, \oe\in R_i, \ue\in R_i^c\}=\{e\in E_N, \oe
\in
R_i, \ue\in K\}.
\]
Hence, $\partial_+K$ is the disjoint union of
\[
\partial_+K=\bigsqcup_{i=1}^k \partial_- R_i.
\]
Hence if we can prove (\ref{K}) for $K_i=R_i^c$, we can prove it for $K$.
Thus we assume, moreover, that $K^c$ has a unique connected component
in the graph $G_N$.
There are four different cases:
\begin{itemize}
\item If $K\subset B(0, \eta_1)$, then
\[
\tilde c_N(\partial_+K)=c(\partial_+K).
\]
Moreover, viewed on $\Z^d$ (when $T_N$ is identified with $[-N/2,N/2[$)
$\partial_+K$ is a cut-set separating
0 from $\infty$ (indeed, $N\ge N_0\ge2\eta_1$), thus
\[
c(\partial_+K)\ge m(c)\ge m(c){\vert K^c\vert\over
N^d}.
\]
\item If $B(0, \eta_0)\subset K$, by (\ref{cut-set}) and (\ref
{1.1}), then
\[
\tilde c_N(\partial_+K)\ge\tilde\theta_N (\partial_+K)\ge
m(c){\vert K^c\vert\over N^d}.
\]
\item
If $K^c\subset B(0, \eta_1)$, then by (\ref{1.3}),
\[
{\vert K^c\vert\over N^d}\le{\vert B(0,\eta_1)\vert\over N_0^d}\le
{C'\over m(c)},
\]
hence,
\[
\tilde c_N(\partial_+K)=c(\partial_+K)\ge C'\ge m(c) {\vert
K^c\vert\over N^d},
\]
since $\partial K^c\neq\varnothing$.
\item
Otherwise $K$ contains at least one
point $x_1$ in $B(0, \eta_1)^c$, and
$K^c$ contains at least one point $y_0$ in $B(0,\eta_0)$
and one point $y_1$ in $B(0,\eta_1)^c$. Hence there is a path
between $y_0$ and $y_1$ in $K^c$ and a directed path between 0 and
$x_1$ in $K$. Let $S(0,i)$ denote the sphere with center 0 and radius
$i$ for the shortest path distance
in $G_N$. It implies that we can find a sequence $z_{\eta_0},
\ldots, z_{\eta_1}$ such that $z_i\in K\cap S(0,i)$ and a sequence
$z'_{\eta_0}, \ldots, z'_{\eta_1}$ such that $z'_i\in K^c\cap
S(0,i)$. Since there is a directed path in $S(0,i)\cup S(0,i-1)$
between $z_i$
and $z'_i$, and a directed path in $K$ between 0 and $z_i$,
it implies that there exists at least $\lfloor\demi(\eta_1-\eta
_0)\rfloor$
different edges in $\partial_+K \cap B_E(0,\eta_1)$. Hence
\[
\tilde c_N(\partial_+K)\ge\bigl\lfloor\tfrac{1}{2}(\eta_1-\eta_0)\bigr\rfloor
C'\ge m((c)).
\]
\end{itemize}
This concludes the proof of (\ref{K}) and of the lemma.

\section{\texorpdfstring{Proof of Theorem \protect\ref{main-result}(ii) and Theorem \protect\ref{ballisticity}}
{Proof of Theorem 1(ii) and Theorem 2}}\label{sec5}

These results are based on classical results on ergodic stationary
sequence; cf.~\cite{Durrett}, pages 343--344.
Let us start with the following lemma.
\begin{lemma}\label{ergodic}
Suppose that there exists an invariant probability measure $\Q
^{(\alpha
)}$, absolutely continuous with respect to $\P^{(\alpha)}$
and invariant for $R$. Then $\Q^{(\alpha)}$ is equivalent to $\P
^{(\alpha)}$ and the Markov chain $(\overline w_n)$
with generator $R$, and the initial law $\Q^{(\alpha)}$ is stationary
and ergodic. Let $(\Delta_i)_{i\ge1}$ be the sequence
\[
\Delta_i=X_{i}-X_{i-1}.
\]
Under the invariant annealed measure $\Q^{(\alpha)}_0(\cdot)=\Q
^{(\alpha
)}(P_0^\omega (\cdot))$, the sequence
$(\Delta_i)$ is stationary and ergodic.
\end{lemma}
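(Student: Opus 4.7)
The plan is to combine the assumed $R$-invariance of $\Q^{(\alpha)}$ with two structural facts of the Dirichlet model: weak ellipticity ($\w(e)>0$ for $\P^{(\alpha)}$-a.e. $\w$ and every edge $e$) and the fact that $\P^{(\alpha)}$, being a product measure, is ergodic with respect to the spatial shifts $(\tau_x)_{x\in\Z^d}$. The main idea for the first two assertions is to turn a $\Q^{(\alpha)}$-a.s.\ invariance under $R$ into a genuine shift-invariance under $\tau_x$, and then to invoke ergodicity of $\P^{(\alpha)}$.

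\textbf{Ergodicity and equivalence.} For ergodicity, I would take a set $A\subset\W$ with $R\indic_A=\indic_A$ $\Q^{(\alpha)}$-a.s.; the pointwise identity $\sum_i\w(0,e_i)\indic_A(\tau_{e_i}\w)=\indic_A(\w)$, read together with $\w(0,e_i)>0$, forces $\indic_A\circ\tau_{e_i}=\indic_A$ on a set of full $\Q^{(\alpha)}$-measure. The exceptional $\Q^{(\alpha)}$-null set is contained in a $\P^{(\alpha)}$-null Borel set, whose countable union of $\Z^d$-translates remains $\P^{(\alpha)}$-null (and hence $\Q^{(\alpha)}$-null) by translation invariance of $\P^{(\alpha)}$. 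Off this enlarged null set, iterating the one-step relation shows that $A$ agrees with the genuinely shift-invariant set $A''=\{\w:\tau_x\w\in A\ \forall x\in\Z^d\}$, so ergodicity of $\P^{(\alpha)}$ gives $\P^{(\alpha)}(A'')\in\{0,1\}$ and hence $\Q^{(\alpha)}(A)\in\{0,1\}$. For equivalence, I would run the same template on $f=d\Q^{(\alpha)}/d\P^{(\alpha)}$: the invariance $\int Rg\,d\Q^{(\alpha)}=\int g\,d\Q^{(\alpha)}$ combined with a change of variable using translation invariance of $\P^{(\alpha)}$ yields the $\P^{(\alpha)}$-a.s.\ identity $f(\w)=\sum_i\w(-e_i,0)f(\tau_{-e_i}\w)$. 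At a point of $\{f=0\}$, positivity of $\w(-e_i,0)$ and $f\ge 0$ force $f(\tau_{-e_i}\w)=0$ for every $i$, so $\{f=0\}$ agrees $\P^{(\alpha)}$-a.s.\ with a genuinely shift-invariant set of $\P^{(\alpha)}$-measure $0$ or $1$; since $\int f\,d\P^{(\alpha)}=1$ the measure must be $0$, giving $\Q^{(\alpha)}\sim\P^{(\alpha)}$.

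\textbf{Stationarity and ergodicity of $(\Delta_i)$, and main obstacle.} Once $\Q^{(\alpha)}$ is $R$-invariant, $(\overline\w_n)_{n\ge 0}$ is a stationary Markov chain under $\Q_0^{(\alpha)}$, and the ergodicity of $\Q^{(\alpha)}$ for $R$ established above is equivalent, by the standard criterion for stationary Markov chains, to ergodicity of the shift on its path space. Since $\overline\w_n=\tau_{\Delta_n}\overline\w_{n-1}$ and the map $x\mapsto\tau_x\w$ is $\P^{(\alpha)}$-a.s.\ (hence $\Q^{(\alpha)}$-a.s.) injective, $\Delta_n$ is a measurable function of the pair $(\overline\w_{n-1},\overline\w_n)$; therefore $(\Delta_n)_{n\ge 1}$ is a measurable factor of a stationary ergodic process and inherits both properties. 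The only genuinely delicate point in the whole argument is the $\Q^{(\alpha)}$-a.s.\ versus $\P^{(\alpha)}$-a.s.\ bookkeeping in the first step: $\Q^{(\alpha)}$ is not a priori shift-invariant, so a $\Q^{(\alpha)}$-a.s.\ one-step shift relation cannot be iterated naively. The countable-union trick, available precisely because $\Q^{(\alpha)}\ll\P^{(\alpha)}$ and $\P^{(\alpha)}$ is shift-invariant, is what makes the passage to a genuinely shift-invariant set legitimate.
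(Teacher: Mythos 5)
Your proof of stationarity and ergodicity of $(\Delta_i)$ takes a genuinely different, and shorter, route than the paper. You observe that $x\mapsto\tau_x\w$ is $\P^{(\alpha)}$-a.s.\ injective, so $\Delta_n$ is a measurable function of $(\overline\w_{n-1},\overline\w_n)$; hence $(\Delta_n)_{n\ge 1}$ is a deterministic factor of the stationary, shift-ergodic chain $(\overline\w_n)_{n\ge 0}$, and factors inherit both properties. The paper instead fixes a shift-invariant event $A\subset(\Z^d)^{\N}$, sets $r(\w)=P_0^\w((\Delta_i)\in A)$, observes that $r(X_n,\w)=r(\overline\w_n)$ is a bounded martingale converging $P_0^\w$-a.s.\ to $\indic_A((\Delta_i))$, and then applies Birkhoff's theorem to $r(\overline\w_n)$ to force $E^{\Q^{(\alpha)}}[r]\in\{0,1\}$. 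Both are valid; yours dispenses with the martingale computation, while the paper's avoids appealing to a.s.\ injectivity of the orbit map (a true but additional fact that must be checked for the i.i.d.\ Dirichlet environment).

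There is, however, a genuine logical gap in your sketch of the first assertion. You run the ergodicity argument before the equivalence argument, and in the ergodicity step you assert that the exceptional $\Q^{(\alpha)}$-null set (where the one-step relation $\sum_i\w(0,e_i)\indic_A(\tau_{e_i}\w)=\indic_A(\w)$ fails) is contained in a $\P^{(\alpha)}$-null set. The hypothesis $\Q^{(\alpha)}\ll\P^{(\alpha)}$ gives only $\P^{(\alpha)}$-null $\Rightarrow$ $\Q^{(\alpha)}$-null; the implication you invoke is $\P^{(\alpha)}\ll\Q^{(\alpha)}$, which is exactly the nontrivial half of the equivalence you establish afterwards, so as written the argument is circular. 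The fix is to reverse the order: first prove equivalence from the $\P^{(\alpha)}$-a.s.\ adjoint relation $f(\w)=\sum_i\w(-e_i,0)f(\tau_{-e_i}\w)$ together with the countable-union trick, showing that $\{f=0\}$ coincides $\P^{(\alpha)}$-a.s.\ with a genuinely shift-invariant set, hence $\P^{(\alpha)}(\{f=0\})\in\{0,1\}$, hence $=0$ since $\int f\,d\P^{(\alpha)}=1$; once $\Q^{(\alpha)}\sim\P^{(\alpha)}$ is established, the exceptional $\Q^{(\alpha)}$-null set in your ergodicity step is indeed $\P^{(\alpha)}$-null and the rest of that step is correct. (The paper itself does not spell this out and simply refers to \cite{Sznitman-ten}.)
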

\begin{pf}
The first assertion on $\Q^{(\alpha)}$ is classical and proved, for
example, in~\cite{Sznitman-ten}, Theorem 1.2.
Since $\Q^{(\alpha)}$ is an invariant probability measure for
$\overline\omega _n$, it is clear that
$(\Delta_i)$ is stationary.
The ergodicity of $(\Delta_i)$ is a consequence of the ergodicity of
$(\omega _n)$. Indeed, since the environment is i.i.d. and not deterministic,
there exists a measurable function $f:\W\times\W\mapsto\Z$ such that
a.s. $\Delta_i=f(\omega _{i-1},\omega _i)$ (indeed, for $\P
^{(\alpha)}$ almost
all $\omega $,
$\tau_x (\omega )=\omega $ if and only if $x=0$, which means that
the increment
$\Delta_i$ is almost surely uniquely determined by the observation of
$\omega _{i-1}$ and $\omega _i$). This implies the ergodicity of the sequence~$(\Delta_i)$.
\end{pf}
\begin{pf*}{Proof of Theorem \protect\ref{main-result}(ii)} Suppose that there
exists an
invariant probability measure $\Q^{(\alpha)}$, absolutely continuous
with respect to $\P^{(\alpha)}$
and invariant for $R$.
Since $(X_n)$ is $\P^{(\alpha)}_0$ a.s. (hence, $\Q^{(\alpha)}_0$ a.s.)
transient (\cite{Sabot}, Theorem~1),
it implies that
\[
E^{\Q^{(\alpha)}}\bigl(P_0^{\omega }(H_0^+=\infty)\bigr)>0,
\]
where $H_0^+$ is the first positive return time of $X_n$ to 0.
Let $R_n$ be the number of points visited by $(X_k)$ at time $n-1$
\[
R_n=\vert\{X_k, k=0, \ldots, n-1\}\vert.
\]
Theorem 6.3.1 of~\cite{Durrett} and Lemma~\ref{ergodic} tell that
%
\begin{equation}\label{Rn}
\P^{(\alpha)}_0 \mbox{ a.s.,}\qquad {R_n\over n}\to E^{\Q^{(\alpha
)}}\bigl(P_0^{\omega
}(H_0^+=\infty)\bigr)>0.
\end{equation}
Let $i_0\in\{1,\ldots,d\}$ be a direction which maximizes $\alpha
_i+\alpha_{i+d}$. Theorem 3 of~\cite{Tournier} tells that if $\kappa\le1$, then the expected exit
time under $\P^{(\alpha)}_0$ of the finite subset
$\{0,e_{i_0}\}$ or $\{0,-e_{i_0}\}$ is infinite. By independence of the
environment under $\P^{(\alpha)}$, we can easily get that
${R_n\over n}\to0$, $\P^{(\alpha)}_0$ a.s. This contradicts (\ref{Rn}).
\end{pf*}
\begin{pf*}{Proof of Theorem \protect\ref{ballisticity}}
(i) is Proposition 12 of~\cite{Tournier}.
Under the annealed invariant law $\Q^{(\alpha)}_0$, $(\Delta_k)$ is a
stationary ergodic sequence with values in $\Z^d$ (hence for any $i\in
\{1, \ldots, 2d\}$, $\Delta_k\cdot e_i$ is also a stationary ergodic
sequence with values in $\Z$). Birkhoff's ergodic Theorem (\cite
{Durrett}, page 337)
gives for free the law of large number
\[
\P_0^{(\alpha)}\mbox{ a.s.,}\qquad {X_n\over n}\to E^{\Q^{(\alpha
)}}(E^{\omega }_0(X_1)).
\]
If $d_\alpha\cdot e_i=0$ then by symmetry of the law of the environment
it implies that $E^{\Q^{(\alpha)}}(E^{\omega }_0(X_1))\cdot e_i=0$,
hence by Theorem 6.3.2 of~\cite{Durrett}, we have that $X_n\cdot e_i=0$
infinitely often. By Lemma 4 of~\cite{Zerner-Merkl} it implies (ii) and
the last assertion of (iii).

For $l\in\R^d$, we set $A_l=\{X_n\cdot l\to\infty\}$. If $l\neq0$
and if $\P^{(\alpha)}_0(A_l)>0$, then
Kalikow 0--1 law (\cite{Kalikow},~\cite{Zerner-Merkl}, Proposition 3)
tells that $\P^{(\alpha)}_0(A_l\cup A_{-l})=1$. Suppose now that
$d_\alpha\cdot e_i>0$ for an integer $i$ in $\{1, \ldots, 2d\}$. In
\cite{Sabot-Tournier} we proved that
$\P^{(\alpha)}_0(A_{e_i})>0$, this implies that $X_n\cdot e_i$ visits 0
a finite number of times $\Q^{(\alpha)}_0$ a.s. By Theorem 6.3.2
of~\cite{Durrett} it implies that
\[
E^{\Q^{(\alpha)}}(E_0^{\omega }(X_1))\cdot e_i \neq0.
\]
Moreover, we know that
\[
\P_0^{(\alpha)}\mbox{ a.s.,}\qquad {X_n\over n}\to E^{\Q^{(\alpha
)}}(E^{\omega }_0(X_1)).
\]
Hence, $\P^{(\alpha)}(A_{\pm e_i})=1$, where $\pm$ corresponds to the
sign of $E^{\Q^{(\alpha)}}(E^{\omega }_0(X_1))\cdot e_i$.
Since we know that $\P^{(\alpha)}_0(A_{e_i})>0$, it implies that
\[
E^{\Q^{(\alpha)}}(E_0^{\omega }(X_1))\cdot e_i > 0.
\]
\upqed\end{pf*}


%

\printaddresses

\end{document}